\documentclass[a4paper,12pt]{amsproc}

\usepackage[utf8]{inputenc}

\usepackage{amsmath}
\usepackage{amssymb}
\usepackage{mathrsfs}
\usepackage{enumerate}
\usepackage{geometry}
\geometry{margin=1in}
\usepackage[dvips]{graphicx}

\newtheorem{theorem}{Theorem}
\newtheorem{lemma}[theorem]{Lemma} 
\newtheorem{proposition}[theorem]{Proposition} 
\newtheorem{cor}[theorem]{Corollary} 
\newtheorem{fact}[theorem]{Fact} 
\newtheorem{remark}[theorem]{Remark} 
\newtheorem{definition}[theorem]{Definition}
\newtheorem{problem}[theorem]{Question}

\def\bbn{\mathbb{N}}

\def\bbs{\mathbb{S}}
\def\w{\omega}

\def\ct{\mathbb{T}}
\newcommand{\mbb}{\mathbb} 
\newcommand{\baire}{\omega^\omega}
\newcommand{\mc}{\mathcal}
\newcommand{\mf}{\mathfrak}

\def\ca{\mathcal{A}}

\def\cd{\mathcal{D}}
\def\cf{\mathcal{F}}

\def\ci{\mathcal{I}}

\def\cp{\mathcal{P}}

\def\b{\mathfrak{b}}
\def\c{\mathfrak{c}}
\def\d{\mathfrak{d}}


\DeclareMathOperator{\Sacks}{\bbs} 

\DeclareMathOperator{\ZFC}{ZFC}

\DeclareMathOperator{\add}{add}
\DeclareMathOperator{\non}{non}
\DeclareMathOperator{\cov}{cov}
\DeclareMathOperator{\cof}{cof}


\def\ed{{ e.d.\,}}
\def\med{{ m.e.d.\,}}


\def\restricted{\upharpoonright}


\def\then{\longrightarrow}


\def\then{\longrightarrow}

\newcommand{\bez}{\backslash}
\newcommand{\se}{\subseteq}
\newcommand{\sen}{\subsetneq}
\newcommand{\es}{\supseteq}

\newcommand{\0}{\emptyset}


\newcommand{\foralmostall}{\forall^\infty}
\newcommand{\existsinfty}{\exists^\infty}

\def\<{\langle}
\def\>{\rangle}

\def\Repicky{{Repick{\'y} }}

\title[Nonmeasurable sets and unions]{Nonmeasurable sets and unions with respect to tree ideals}
\author{Marcin Michalski, Robert Rałowski and Szymon Żeberski}

\address{ Department of Computer Science, Faculty of Fundamental Problems of Technology, Wroc\l aw University of Science and Technology, Wybrzeże Wyspiańskiego 27, 50-370 Wroc\l aw, Poland}

\thanks{\hspace*{-0.77cm} AMS Classification: Primary 03E17, 03E50, 03E75; Secondary 28A99\\
Keywords: Marczewski ideal, Laver tree, Miller tree, m.e.d. family, dominating family, nonmeasurable set, Polish space, Continuum Hypothesis.\\
The work has been partially financed by grant S50129/K1102 (0401/0086/16) from the Faculty of Fundamental Problems of Technology of Wrocław University of Technology.}

\email[Marcin Michalski]{marcin.k.michalski@pwr.edu.pl}
\email[Robert Rałowski]{robert.ralowski@pwr.edu.pl}
\email[Szymon Żeberski]{szymon.zeberski@pwr.edu.pl}

\begin{document}

\begin{abstract} In this paper we consider a notion of nonmeasurablity with respect to Marczewski and Marczewski-like tree ideals $s_0$, $m_0$, $l_0$, and $cl_0$. We show that there exists a subset $A$ of the Baire space $\omega^\omega$ which is $s$-, $l$-, and $m$-nonmeasurable, that forms dominating m.e.d. family. We introduce and investigate a notion of $\mbb{T}$-Bernstein sets - sets that intersect but does not containt any body of a tree from a given family of trees $\mbb{T}$. We also acquire some results on $\mc{I}$-Luzin sets, namely we prove that there are no $m_0$-, $l_0$-, and $cl_0$-Luzin sets and that if $\mf{c}$ is a regular cardinal, then the algebraic sum (considered on the real line $\mbb{R}$) of a generalized Luzin set and a generalized Sierpiński set belongs to $s_0, m_0$, $l_0$ and $cl_0$.

\end{abstract}

\maketitle

\section{Introduction and preliminaries}
We will use standard set-theoretic notation following e.g. \cite{Jech}.  
For a set $X$, $P(X)$ denotes the power set of $X$ and $|X|$ denotes the cardinality of $X$. 
If $\kappa$ is a cardinal number then we denote:
\begin{itemize}
\item $[X]^\kappa \hspace{0.23cm}=\{A\subseteq X:\ |A|=\kappa\}$,
\item $[X]^{<\kappa}=\{A\subseteq X:\ |A|<\kappa\}$,
\item $[X]^{\le\kappa}=\{A\subseteq X:\ |A|\le\kappa\}$.
\end{itemize}
Let $X$ be an uncountable Polish space and $\ci\subseteq P(X)$ be a  $\sigma$-ideal. Let us recall some cardinal coefficients from Cichoń's Diagram:
\begin{itemize}
\item $\add(\ci)=\min \{ |\ca|:\; \ca\subseteq \ci\land \bigcup\ca\notin \ci \}$,
\item $\non(\ci)=\min \{ |A|:\; A\subseteq X\land A\notin \ci\}$,
\item $\cov(\ci)=\min \{ |\ca|:\; \ca\subseteq \ci\land \bigcup\ca = X\}$,
\item $\cof(\ci)=\min \{ |\ca|:\; \ca\subseteq \ci\land (\forall A\in \ci)(\exists B\in\ca) (A\subseteq B)\}$,
\item $\b=\min\{|\cf|: \cf\se\omega^\omega \land (\forall x\in\baire)(\exists f\in\cf)(\existsinfty n)(x(n)<f(n))\}$,
\item $\d=\min\{|\cf|: \cf\se\omega^\omega \land (\forall x\in\baire)(\exists f\in\cf)(\foralmostall n)(x(n)<f(n))\}$.
\end{itemize}





We call $\b$ a bounding number and $\d$ a dominating number. A family $\cf\se\omega^\omega$ is dominating, if $\cf$ has a property described in the definition of domintaing number (it doesn't have to be of minimal cardinality).
\\
We say that $T$ is a tree on a set $A$ if $T\se A^{<\omega}$ and whenever $\tau\in T$ then $\tau\restricted n\in T$ for each natural $n$.
\begin{definition}
Let $T$ be a tree on a set $A$. Then
\begin{itemize}
\item for each $t\in T$ $succ(t)=\{a\in A: t^\frown a\in T\}$;
\item $split(T)=\{t\in T: |succ(t)|\geq 2\}$;
\item $\omega$-$split(T)=\{t\in T: |succ(t)|=\aleph_0\}$;
\item for $s\in T$ $Succ_T(s)=\{t\in split(T): s\sen t, (\forall t'\in T)(s\sen t' \sen t \then t'\notin split(T) )\}$;
\item for $s\in T$ $\omega$-$Succ_T(s)=\{t\in \omega$-$split(T): s\sen t, (\forall t'\in T)(s\sen t' \sen t \then t'\notin \omega$-$split(T) )\}$;
\item $stem(T)\in T$ is a node $\tau$ such that for each $s\subsetneq\tau$ $|succ(s)|=1$ and $|succ(\tau)|>1$.
\end{itemize}
\end{definition}

Let us now recall definitions of families of trees.
\begin{definition}
A tree $T$ on $\omega$ is called
\begin{itemize}
\item Sacks tree or perfect tree, denoted by $T\in\mbb{S}$, if for each node $s\in T$ there is $t\in T$ such that $s\se t$ and $|succ(t)|\geq 2$;
\item Miller tree or superperfect tree, denoted by $T\in\mbb{M}$, if for each node $s\in T$ exists $t\in T$ such that $s\se t$ and $|succ(t)|=\aleph_0$;
\item Laver tree, denoted by $T\in\mbb{L}$, if for each node $t\es stem(T)$ we have $|succ(t)|=\aleph_0$;
\item complete Laver tree, denoted by $T\in\mbb{CL}$, if $T$ is Laver and $stem(T)=\0$;
\item Hechler tree, denoted by $T\in\mbb{H}$, if  for each node $t\es stem(T)$ we have that a set $\{n\in\omega: t^\frown n\notin T\}$ is finite;
\item complete Hechler, denoted by $T\in\mbb{CH}$ tree, if $T$ is Hechler and $stem(T)=\0$.
\end{itemize}
\end{definition}
The notion of complete Laver trees was defined and investigated in \cite{R1}, although Miller in \cite{Miller1} defines Laver trees \textit{de facto} as complete Laver trees and Hechler trees as complete Hechler trees.

For every tree $T\subseteq \omega^{<\omega}$ let $[T]$ be the set of all infinite branches of $T$, i.e.
$$
[T]=\{ x\in \omega^\omega:\; (\forall n\in\omega)\; x\restricted n\in T\}.
$$

\begin{definition}[Tree ideal] Let $\ct$ be a family of trees. We say that $A\in P(\omega^\omega)$ is in $t_0$ iff
$$
(\forall P\in \ct)(\exists Q\in \ct)\; Q\subseteq P\land [Q]\cap A=\emptyset.
$$
\end{definition}

\begin{definition}[$t$-measurability] Let $\ct$ be a family of trees. We say that $A\in P(\omega^\omega)$ is $t$-measurable iff
$$
(\forall P\in \ct)(\exists Q\in \ct)\; Q\subseteq P\land ([Q]\subseteq A\lor [Q]\cap A=\emptyset).
$$
\end{definition}

$s_0$ tree ideal is simply a classic Marczewski ideal (see \cite{Marczewski}).

It is well known due to Judah, Miller, Shelah (see \cite{JMS}) and \Repicky (see \cite{Rep}) that $add(s_0)\le cov(s_0)\le cof(\c)\le non(s_0)=\c<cof(s_0)\le 2^\c$. Moreover, in \cite{BKW} Brendle, Khomskii and Wohofsky have shown that also $\c<cof(m_0)$ and $\mf{c}<cof(l_0)$. Clearly $\omega_1\le \add(l_0)\le \cov(l_0)\le \c$ holds. In \cite{Goldstern}, Goldstern, \Repicky\!\!, Shelah and Spinas showed that it is relatively consistent with $\ZFC$ that $\add(l_0)<\cov(l_0)$.





Let us notice that the families $s_0, l_0, m_0$ form  $\sigma$-ideals. On the other hand $cl_0$ is not a $\sigma$-ideal.
To see that it is enough to consider sets of the form $C_n=\{x\in\omega^\omega:\ x(0)=n \}.$ Then $C_n\in cl_0$ for each $n$, but $\bigcup_n C_n=\omega^\omega$. Using the fact that $s_0$ is a $\sigma$-ideal we may give another proof of the following well known result.
\begin{proposition}[Essentially a joke]
$cf(\mf{c})>\aleph_0$.
\end{proposition}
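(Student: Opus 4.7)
The plan is to assume $\cof(\mathfrak{c}) = \aleph_0$ and derive a contradiction from the two facts recalled just above the proposition: that $\non(s_0) = \mathfrak{c}$ and that $s_0$ is a $\sigma$-ideal. Under that assumption, fix an increasing cofinal sequence of cardinals $\langle \kappa_n : n\in\omega\rangle$ with $\kappa_n < \mathfrak{c}$ and $\sup_n \kappa_n = \mathfrak{c}$. Since $|\omega^\omega| = \mathfrak{c}$, one can then partition $\omega^\omega$ into countably many pieces $\omega^\omega = \bigcup_{n\in\omega} X_n$ with $|X_n| \le \kappa_n < \mathfrak{c}$.

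By $\non(s_0) = \mathfrak{c}$, every subset of $\omega^\omega$ of cardinality strictly less than $\mathfrak{c}$ belongs to $s_0$; in particular each $X_n \in s_0$. Using that $s_0$ is a $\sigma$-ideal, we conclude $\omega^\omega = \bigcup_n X_n \in s_0$. On the other hand, the full tree $\omega^{<\omega}$ is a perfect tree, i.e.\ lies in $\mathbb{S}$, and for every perfect subtree $Q \se \omega^{<\omega}$ its body $[Q]$ is a nonempty subset of $\omega^\omega$, so $[Q]\cap \omega^\omega = [Q] \neq \emptyset$. Thus $\omega^\omega \notin s_0$, contradicting the previous line.

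The only thing requiring attention is the trivial observation that $\omega^\omega \notin s_0$; everything else is bookkeeping. That is precisely why the proposition is labelled \emph{essentially a joke} --- the statement is of course the classical consequence of König's theorem, and the $s_0$-proof merely repackages König's argument inside the $\sigma$-additivity of $s_0$ together with $\non(s_0) = \mathfrak{c}$.
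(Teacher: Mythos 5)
Your proof is correct and follows essentially the same route as the paper: decompose the whole space into countably many pieces of cardinality less than $\mathfrak{c}$, note each lies in $s_0$ because $\non(s_0)=\mathfrak{c}$, and contradict $\sigma$-additivity since the whole space is not in $s_0$. The only cosmetic difference is that you work in $\omega^\omega$ and spell out the cofinal sequence and the reason $\omega^\omega\notin s_0$, while the paper states the same argument for $\mathbb{R}$ more tersely.
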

\begin{proof}
Suppose that $cf(\mf{c})=\aleph_0$ and let $\mbb{R}=\bigcup_{n\in\omega}A_n$, $|A_n|<\mf{c}$ for each n$\in\omega$. Sets of cardinality lesser than $\mf{c}$ belong to $s_0$, so $\mbb{R}=\bigcup_{n\in\omega}A_n\in s_0$, a contradiction.
\end{proof}
\section{Tree ideals and measurability}
In \cite{Brendle} the following result was obtained.
\begin{theorem}[Brendle]
		If $i_0, j_0\in \{s_0, l_0, m_0\}$ and $i_0\neq j_0$ then $i_0\not\subseteq j_0$.	
\end{theorem}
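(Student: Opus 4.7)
The plan is to exhibit, for each ordered pair of distinct ideals $i_0,j_0\in\{s_0,l_0,m_0\}$, an explicit witness $A\in i_0\setminus j_0$, giving six non-inclusions in total. Using the nesting $\mathbb{L}\subseteq\mathbb{M}\subseteq\mathbb{S}$ of tree families, these split naturally into three strategies.

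For $l_0\not\subseteq s_0$ and $m_0\not\subseteq s_0$ I would use the common witness $A=2^\omega\subseteq\omega^\omega$. Given a Laver or Miller tree $T$, thin $T$ to a subtree $T'$ in the same family with unbounded branches: at each node above $\mathrm{stem}(T)$ (resp.\ at each $\omega$-splitting node) of level $n$, the $\aleph_0$ successors available in $T$ include infinitely many values $\geq n+2$, so restricting to these preserves the Laver (Miller) property and forces $[T']\cap 2^\omega=\emptyset$. Hence $A\in l_0\cap m_0$. On the other hand $2^{<\omega}$ is itself a perfect tree with $[2^{<\omega}]=A$, so every perfect $Q\subseteq 2^{<\omega}$ has $[Q]\subseteq A$, giving $A\notin s_0$.

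For $l_0\not\subseteq m_0$ set $A=[M_0]$, where $M_0$ is the Miller tree with $\omega$-splits at every even level and a single successor $0$ at every odd-level node. Then $M_0$ itself witnesses $A\notin m_0$, since every Miller subtree of $M_0$ has its branch set inside $[M_0]=A$. For $A\in l_0$, given a Laver tree $L$ pick an odd $K>|\mathrm{stem}(L)|$ and delete the successor $0$ at every node $t\supseteq\mathrm{stem}(L)$ with $|t|=K-1$; the resulting $L'$ is still Laver because each such $t$ had $\aleph_0$ successors in $L$, and every branch $x\in[L']$ satisfies $x(K)\neq 0$, so $x\notin[M_0]$. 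The mirror case $m_0\not\subseteq l_0$ is subtler, since $[L_0]$ is never in $m_0$ for a Laver $L_0$ (because $L_0\in\mathbb{M}$ itself witnesses the contrary); here one must design $A$ asymmetrically, exploiting that Miller trees admit long single-successor stretches between sparse $\omega$-splits (allowing one to steer Miller subtrees away from $A$) while the uniform splitting of some fixed Laver $L_0$ forces its Laver subtrees to hit $A$.

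For $s_0\not\subseteq l_0$ and $s_0\not\subseteq m_0$ the plan is a transfinite construction of length $\mathfrak{c}$. Fix a Laver (resp.\ Miller) tree $T_0$, enumerate its subtrees in the relevant family as $(T_\alpha)_{\alpha<\mathfrak{c}}$, and enumerate all perfect trees as $(P_\alpha)_{\alpha<\mathfrak{c}}$. At stage $\alpha$ simultaneously pick $x_\alpha\in[T_\alpha]\setminus\bigcup_{\beta<\alpha}[Q_\beta]$ and a perfect $Q_\alpha\subseteq P_\alpha$ with $[Q_\alpha]\cap\{x_\beta:\beta\leq\alpha\}=\emptyset$. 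Put $A=\{x_\alpha:\alpha<\mathfrak{c}\}$; then $T_0$ witnesses $A\notin l_0$ (resp.\ $\notin m_0$) because each $T_\alpha\subseteq T_0$ contains $x_\alpha\in A$, while $\{Q_\alpha\}$ witnesses $A\in s_0$. The main obstacle is guaranteeing the stage-$\alpha$ choices: $\bigcup_{\beta<\alpha}[Q_\beta]$ can have cardinality $\mathfrak{c}$, so a raw counting argument fails. The remedy is a fusion-style diagonalization ensuring that the union does not exhaust $[T_\alpha]$, combined with the standard fact that inside a perfect tree one can always avoid any previously fixed family of fewer than $\mathrm{non}(s_0)=\mathfrak{c}$ branches. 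Engineering this compatibility between the perfect subtrees $Q_\beta$ and the Laver (Miller) subtrees $T_\alpha$, together with completing the mirror construction for $m_0\not\subseteq l_0$ indicated above, is where the technical core of the proof concentrates.
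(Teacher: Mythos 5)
First, note that the paper does not prove this theorem at all: it is quoted from Brendle's \emph{Strolling through paradise} and used as a black box, so there is no in-paper proof to match yours against. Judged on its own terms, your proposal genuinely establishes only three of the six non-inclusions. The witness $2^\omega$ for $l_0\not\subseteq s_0$ and $m_0\not\subseteq s_0$ is correct (thinning one splitting level to successors $\geq 2$ keeps the tree Laver, resp.\ Miller, and pushes its body off $2^\omega$, while every perfect subtree of $2^{<\omega}$ has body inside $2^\omega$), and the witness $[M_0]$ for $l_0\not\subseteq m_0$ works as well, modulo an off-by-one: deleting the successor $0$ at nodes of length $K-1$ forces $x(K-1)\neq 0$ with $K-1$ even, which does not leave $[M_0]$; you should instead delete the successor $0$ at every node of odd length $j\geq|\mathrm{stem}(L)|$.

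The remaining three cases --- $m_0\not\subseteq l_0$, $s_0\not\subseteq l_0$, $s_0\not\subseteq m_0$ --- are where the actual content of Brendle's theorem lives, and for these you only name the obstruction without removing it. The transfinite scheme $x_\alpha\in[T_\alpha]\setminus\bigcup_{\beta<\alpha}[Q_\beta]$ fails exactly as you observe, because each $[Q_\beta]$ has cardinality $\mathfrak{c}$; ``a fusion-style diagonalization'' is not an argument. The missing ingredient is a concrete almost-disjointness lemma between dense subfamilies of the two tree families involved: Brendle's \emph{apple trees} (dense in the Miller trees) and \emph{pear trees} (contained in every Laver tree) satisfy $|[A]\cap[P]|\leq 1$ for every apple tree $A$ and pear tree $P$, which is precisely what makes the set $\{x_\alpha:\alpha<\mathfrak{c}\}$ with $x_\alpha\in[P_{L_\alpha}]\setminus\bigcup_{\beta<\alpha}[A_\beta]$ well defined and lands it in $m_0\setminus l_0$; an analogous pair of dense subfamilies handles the $s_0$ cases. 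This is the machinery the present paper itself imports when it modifies Brendle's Theorems 2.1 and 2.2 to separate $m_0$ and $s_0$ from $cl_0$. Without stating and proving such a lemma (or an equivalent device), your constructions for these three directions cannot be carried out, so the proof is incomplete precisely on its technical core.
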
	
First we will compare the ideal $cl_0$ with ideals $s_0, m_0, l_0$. 
\begin{fact}
		$cl_0\not\subseteq (l_0\cup m_0\cup s_0)$.
\end{fact}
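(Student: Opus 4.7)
The plan is to reuse the set $C_0 = \{x \in \omega^\omega : x(0) = 0\}$, which was already observed to lie in $cl_0$ in the discussion preceding the current statement. The underlying heuristic is that membership in $cl_0$ is \emph{easier} than membership in $l_0$, $m_0$, or $s_0$: a complete Laver tree must have empty stem and so splits infinitely at the root, which lets us simply discard the subtree through $\<0\>$ without destroying completeness. By contrast, Laver, Miller, and Sacks trees each admit natural representatives whose \emph{entire} body lies in $C_0$, so $C_0$ cannot belong to any of those three ideals.

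First, for completeness, I would re-verify $C_0 \in cl_0$. Given $P \in \mathbb{CL}$, put
$$ Q = \{t \in P : t = \0 \text{ or } t(0) \neq 0\}. $$
Because $stem(P) = \0$, the root of $P$ has $\aleph_0$ many immediate successors, so deleting the (at most one) successor $\<0\>$ still leaves infinitely many; every other node of $Q$ retains its full $P$-successor set, which is infinite because $P$ is complete Laver. Thus $Q \in \mathbb{CL}$, $Q \subseteq P$, and $[Q] \cap C_0 = \0$.

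Next I would exhibit, for each family $\mathbb{S}$, $\mathbb{M}$, $\mathbb{L}$, a tree whose body is contained in $C_0$. The tree $T_0 = \{t \in \omega^{<\omega} : t = \0 \text{ or } t(0) = 0\}$ is simultaneously Sacks and Miller, with $[T_0] \subseteq C_0$; any perfect or superperfect subtree $Q \subseteq T_0$ then satisfies $\0 \neq [Q] \subseteq C_0$, so $C_0 \cap [Q] \neq \0$, yielding $C_0 \notin s_0$ and $C_0 \notin m_0$. For $l_0$, take the Laver tree $T_L$ with $stem(T_L) = \<0\>$ and full $\omega$-branching above the stem; then $[T_L] \subseteq C_0$, and the same argument forces $C_0 \notin l_0$. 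Combining the two halves gives $C_0 \in cl_0 \setminus (s_0 \cup m_0 \cup l_0)$.

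The argument has no real obstacle; the only point deserving even a second thought is confirming that the subtree $Q$ above really is complete Laver, which reduces to the triviality that removing one element from an infinite set leaves an infinite set. The whole proof is powered by the asymmetry ``empty stem required'' versus ``nontrivial stem allowed'' that distinguishes $\mathbb{CL}$ from the other three tree families.
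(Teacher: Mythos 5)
Your proposal is correct and follows essentially the same route as the paper: the paper also uses $C_0=\{x:x(0)=0\}$, notes $C_0\in cl_0$ by pruning the branch through $0$ at the (empty) stem, and derives $C_0\notin l_0\cup m_0\cup s_0$ from the fact that $C_0$ is itself the body of a Laver (hence Miller, hence Sacks) tree, which is exactly your explicit witness $T_0=T_L$. Your write-up just spells out the details that the paper compresses into the inclusion chain $\mathbb{CL}\subseteq\mathbb{L}\subseteq\mathbb{M}\subseteq\mathbb{S}$.
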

\begin{proof}
	 To show the assertion let us take $C_0=\{x\in\omega^\omega:\ x(0)=0\}$. By
	 $\mathbb{CL}\subseteq\mathbb{L}\subseteq\mathbb{M}\subseteq\mathbb{S}$, $[C_0]\notin l_0\cup m_0\cup s_0$. On the other hand $[C_0]\in cl_0$, which finishes the proof.
\end{proof}	
	
\begin{theorem}\label{cl0 a inne}
	      The following statements are true:
		\begin{enumerate}[(i)]
			\item $m_0\not\subseteq cl_0$. 
			\item $s_0\not\subseteq cl_0$.
		\end{enumerate}
\end{theorem}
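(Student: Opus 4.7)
The plan is a transfinite recursion of length $\mathfrak{c}$ producing a single set $A\subseteq\omega^\omega$ that witnesses both (i) and (ii) simultaneously. I will fix the complete Laver tree $P=\omega^{<\omega}$ as the intended witness for $A\notin cl_0$. Enumerate the complete Laver subtrees of $P$ as $\{T_\alpha:\alpha<\mathfrak{c}\}$, all Miller trees as $\{M_\alpha:\alpha<\mathfrak{c}\}$, and all perfect trees as $\{P_\alpha:\alpha<\mathfrak{c}\}$. I construct by recursion on $\alpha<\mathfrak{c}$ a point $x_\alpha\in\omega^\omega$, a Miller subtree $N_\alpha\subseteq M_\alpha$, and a perfect subtree $Q_\alpha\subseteq P_\alpha$.

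At stage $\alpha$, first apply standard fusion to produce $N_\alpha$ and $Q_\alpha$ of the required type whose bodies avoid the set $X_\alpha:=\{x_\beta:\beta<\alpha\}$. This is possible because $|X_\alpha|<\mathfrak{c}$, and by $\non(s_0)=\mathfrak{c}$ (stated in the excerpt) together with the analogous bound $\non(m_0)=\mathfrak{c}$, every set of cardinality less than $\mathfrak{c}$ is both $s_0$- and $m_0$-null. Second, choose
$$x_\alpha\in[T_\alpha]\setminus\bigcup_{\beta\leq\alpha}\bigl([N_\beta]\cup[Q_\beta]\bigr).$$

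Set $A=\{x_\alpha:\alpha<\mathfrak{c}\}$. Each $N_\alpha$ is a Miller subtree of $M_\alpha$ with $[N_\alpha]\cap A=\emptyset$: points $x_\beta$ with $\beta<\alpha$ were avoided in the fusion, and points $x_\beta$ with $\beta>\alpha$ were chosen outside $[N_\alpha]$ in the second action. Hence $A\in m_0$, and analogously $A\in s_0$. Moreover, $x_\alpha\in A\cap[T_\alpha]$ for every $\alpha$, so no complete Laver subtree of $P$ has body disjoint from $A$, giving $A\notin cl_0$. Thus $A$ witnesses both (i) and (ii).

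The main obstacle is guaranteeing the second action at every stage, i.e.\ that $[T_\alpha]$ is not covered by $\bigcup_{\beta\leq\alpha}([N_\beta]\cup[Q_\beta])$. The plan is to arrange in the first action that each $N_\beta$ and $Q_\beta$ has nonempty stem and body nowhere dense in $\omega^\omega$, with a further refinement ensuring that the trace $([N_\beta]\cup[Q_\beta])\cap[T_\gamma]$ is nowhere dense in the Polish space $[T_\gamma]$ for every later $\gamma\geq\beta$; the space $[T_\gamma]$ is homeomorphic to $\omega^\omega$ because $T_\gamma$ is complete Laver, so has the Baire property. A Baire-category argument inside $[T_\alpha]$ then excludes its coverage by fewer than $\mathfrak{c}$ such nowhere dense closed sets. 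The careful balancing of the fusion in the first action against the nowhere-density requirement exploited in the second is the most delicate part of the argument, and is the only place where cardinal-arithmetic care is needed.
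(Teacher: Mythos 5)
There is a genuine gap, and it sits exactly at the point you yourself flag as the main obstacle. Your plan is to arrange that each trace $([N_\beta]\cup[Q_\beta])\cap[T_\gamma]$ is closed and nowhere dense in $[T_\gamma]$, and then to conclude that $[T_\alpha]$ cannot be covered by the fewer than $\mathfrak{c}$ sets $[N_\beta]\cup[Q_\beta]$, $\beta\le\alpha$, ``by a Baire-category argument.'' The Baire category theorem only rules out coverings by \emph{countably} many nowhere dense sets; ruling out coverings by $<\mathfrak{c}$ many of them is the assertion $\cov(\mathcal{M})=\mathfrak{c}$, which is not a theorem of $\ZFC$ (it fails, for instance, in the random real model, where $\cov(\mathcal{M})=\aleph_1<\mathfrak{c}$). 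Since $[T_\alpha]$ is homeomorphic to $\omega^\omega$, it can consistently be covered by $\aleph_1<\mathfrak{c}$ closed nowhere dense sets, so the point $x_\alpha$ you need at stage $\alpha$ may simply not exist. A secondary problem is that you require nowhere-density of each trace on all $\mathfrak{c}$ many bodies $[T_\gamma]$ simultaneously, which a countable fusion cannot secure requirement by requirement; you would have to isolate a single structural property of $N_\beta$ and $Q_\beta$ (essentially ``contains no Laver subtree'') that implies it uniformly. That part is fixable, but it does not rescue the covering step.

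The paper's proof, following Brendle, replaces topological smallness by cardinal smallness, and this is the idea your proposal is missing. Every Miller tree contains an \emph{apple} subtree (apple trees are dense in the Miller order), every complete Laver tree $C$ contains a \emph{pear} subtree $P_C$, and $|[A]\cap[P_C]|\le 1$ for every apple tree $A$ and pear tree $P_C$. Shrinking each $M_\beta$ to an apple tree makes $\bigcup_{\beta<\alpha}[N_\beta]$ meet $[P_{T_\alpha}]$ in at most $|\alpha|<\mathfrak{c}$ points, so $x_\alpha$ can be chosen in $[P_{T_\alpha}]\setminus\bigcup_{\beta<\alpha}[N_\beta]$ by counting alone, with no appeal to category. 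To repair your single-set version (which would otherwise be a nice economy, giving one witness for both (i) and (ii), where the paper runs two separate constructions), you must strengthen ``nowhere dense trace'' to ``trace of cardinality $<\mathfrak{c}$'' --- in fact $\le 1$ --- which is precisely what the apple/pear machinery delivers on the Miller side and what the analogous combinatorics of Brendle's Theorem~2.2 delivers on the perfect-tree side.
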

\begin{proof}
	To prove that $m_0\setminus cl_0\neq\emptyset$ we will slightly modify the proof of Theorem 2.1 from \cite{Brendle}. We will use the notions of apple trees and pear trees.
	\\
	First, let us recall that each Miller tree contains an apple tree and each apple tree is a special kind of a Miller tree (apple trees forms a dense subfamily in all Miller trees).
	\\
	Second, each complete Laver tree $C$ contains a pear tree $P_C$. A pear tree is not a complete Laver tree, it is only a special kind of Sacks tree. Pear trees $P_C$ have the following property: for every apple tree $A$ and pear tree $P_C$ $|[A]\cap [P_C]|\le 1$.
	\\
	Let us now enumerate all apple trees $\{A_\alpha:\ \alpha<{\mathfrak c}\}$ and all complete Laver trees $\{C_\alpha:\ \alpha<\mathfrak c\}$. 
	Having the above two propositions we can proceed by induction and construct a sequence $(x_\alpha)_{\alpha<\mf{c}}$ such that for every $\alpha<\mathfrak c$:
	\[
	x_\alpha\in [P_{C_\alpha}]\setminus\bigcup_{\beta<\alpha}[A_\beta]
	\]
	Finally, we set $X=\{x_\alpha:\ \alpha<\mathfrak c\}.$ Let us notice that $X\in m_0\setminus cl_0$, which finishes the first part of the proof.
	
	To prove that $s_0\setminus cl_0\neq\emptyset$ we use slight modification of the proof of Theorem 2.2 from \cite{Brendle}, which fits a similar pattern from the first case. 
	  
\end{proof}	

\begin{problem}\label{Question cl0 a l0}
	Is it true that $l_0\not\subseteq cl_0$?
\end{problem}	

As a consequence we obtain the following result.	
\begin{cor}
	   The following statements are true:
	\begin{enumerate}[(i)]
		\item There exists a $cl$-nonmeasurable set which is  $m$-measurable.
		\item There exists a $cl$-nonmeasurable set which is  $s$-measurable.
	\end{enumerate}	
\end{cor}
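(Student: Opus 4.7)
The plan is to derive the corollary directly from Theorem~\ref{cl0 a inne}, together with the elementary observation that membership in a tree ideal $t_0$ already implies $t$-measurability: the second disjunct in the definition of $t$-measurability is then available for every $P$. In particular any $X \in m_0$ is automatically $m$-measurable and any $Y \in s_0$ is automatically $s$-measurable, which takes care of the positive half of both statements.

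For the $cl$-nonmeasurability in part~(i), I would take a witness $X \in m_0 \setminus cl_0$ supplied by Theorem~\ref{cl0 a inne}(i). Since $X \notin cl_0$, we may fix $P \in \mbb{CL}$ such that every complete Laver subtree $Q \subseteq P$ meets $X$. Assuming toward a contradiction that $X$ is $cl$-measurable, we would obtain some $Q \in \mbb{CL}$, $Q \subseteq P$, with $[Q] \subseteq X$, since the alternative $[Q] \cap X = \emptyset$ is excluded by the choice of $P$. The crucial step is then to note that $\mbb{CL} \subseteq \mbb{L} \subseteq \mbb{M}$, so this $Q$ is in particular a Miller tree; applying $X \in m_0$ to $Q$ produces a Miller $Q' \subseteq Q$ with $[Q'] \cap X = \emptyset$, contradicting $\emptyset \neq [Q'] \subseteq [Q] \subseteq X$.

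Part~(ii) is entirely analogous, now using the inclusion $\mbb{CL} \subseteq \mbb{S}$ together with a witness $X \in s_0 \setminus cl_0$ from Theorem~\ref{cl0 a inne}(ii): we again fix $P \in \mbb{CL}$ forbidding empty intersections with $X$, extract a $cl$-measurable alternative $Q \subseteq P$ with $[Q] \subseteq X$, view $Q$ as a Sacks tree, and find a Sacks $Q' \subseteq Q$ disjoint from $X$, a contradiction.

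I do not foresee a genuine obstacle here: everything used beyond Theorem~\ref{cl0 a inne} is the trivial implication $t_0 \Rightarrow t$-measurable, the inclusion chain $\mbb{CL} \subseteq \mbb{L} \subseteq \mbb{M} \subseteq \mbb{S}$, and the fact that trees in these families have nonempty body (so the forced equality $[Q'] = \emptyset$ really is contradictory).
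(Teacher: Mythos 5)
Your proof is correct and follows the same route the paper intends: the corollary is stated there as an immediate consequence of Theorem~\ref{cl0 a inne}, with exactly the two ingredients you supply, namely that $t_0$-membership trivially gives $t$-measurability and that the inclusion $\mbb{CL}\subseteq\mbb{L}\subseteq\mbb{M}\subseteq\mbb{S}$ turns a would-be $cl$-measurability witness $[Q]\subseteq X$ into a contradiction with $X\in m_0$ (resp.\ $X\in s_0$). Your write-up just makes explicit the small step the paper leaves to the reader.
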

Let us introduce a notion of $\mbb{T}$-Bersntein sets.
\begin{definition}
Let $\mbb{T}$ a family of trees. We say that a set $B$ is an $\mbb{T}$-Bernstein set if for every $T\in\mbb{T}$ $B\cap [T]\neq\0$ and $B\bez [T]\neq \0$.
\end{definition}
Observe that a classic Bernstein set is an $\mbb{S}$-Bernstein set. If $\mbb{T}\se \mbb{T}'$ are families of trees, then $\mbb{T}'$-Bersntein sets are $\mbb{T}$-Bernstein sets. No $\mbb{T}$-Bernstein set is in $t_0$ (or $t$-measurable), and if $\mbb{T}\se \mbb{T}'$ then $\mbb{T}'$-Bernstein sets don't belong to $t_0$. Also note that if $\mbb{T}\sen \mbb{T}'$ then a $\mbb{T}$-Bernstein set may be not a $\mbb{T}'$-Bernstein set (e.g. one may fix a tree from $\mbb{T}'\bez\mbb{T}$ which body will be always omitted). The following theorem slightly generalizes Theorems 2.1 and 2.2 from \cite{Brendle}.
\begin{theorem}
The following statements are true:
\begin{enumerate}[(i)]
\item There exists an $\mbb{L}$-Bernstein set which belongs to $m_0$.
\item There exists an $\mbb{M}$-Bernstein set which belongs to $s_0$.
\end{enumerate}
\end{theorem}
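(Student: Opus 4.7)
Both parts follow the template of the proof of Theorem~\ref{cl0 a inne} (itself adapted from Brendle, \cite{Brendle}), refined by adding a second ``witness'' point at each recursion step to force the Bernstein property. The key input in each case is a pair of dense subfamilies of the two relevant tree families whose bodies have at most one point in common.

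\emph{Part (i).} Enumerate all Laver trees as $\{L_\alpha:\alpha<\mathfrak{c}\}$ and all Miller trees as $\{M_\alpha:\alpha<\mathfrak{c}\}$. For every Laver $L$ attach a pear subtree $P_L\subseteq L$ (apply the construction recalled in the proof of Theorem~\ref{cl0 a inne} to the complete Laver tree sitting above the stem of $L$), and recall that apples are dense in Miller trees and satisfy $|[P]\cap[A]|\le 1$ for every pear $P$ and apple $A$. Construct recursively, for $\alpha<\mathfrak{c}$, an apple $Q_\alpha\subseteq M_\alpha$ with
\[
[Q_\alpha]\cap\{x_\beta:\beta<\alpha\}=\emptyset,
\]
and two distinct points $x_\alpha,y_\alpha\in[P_{L_\alpha}]$ with
\[
\{x_\alpha,y_\alpha\}\cap\Bigl(\bigcup_{\beta\le\alpha}[Q_\beta]\cup\{x_\beta,y_\beta:\beta<\alpha\}\Bigr)=\emptyset.
\]
The first choice is possible because the forbidden set has cardinality $<\mathfrak{c}=\non(m_0)$, so one can first find a Miller subtree of $M_\alpha$ missing it and then pass to an apple inside. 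The second is possible because $|[P_{L_\alpha}]\cap\bigcup_{\beta\le\alpha}[Q_\beta]|\le|\alpha|+1<\mathfrak{c}$ by the pear/apple bound, while $|[P_{L_\alpha}]|=\mathfrak{c}$. Setting $X=\{x_\alpha:\alpha<\mathfrak{c}\}$, the point $x_\alpha$ witnesses $X\cap[L_\alpha]\ne\emptyset$, the point $y_\alpha$ (which differs from every $x_\beta$ by the avoidance clauses at stages $\le\alpha$ and $>\alpha$) witnesses $[L_\alpha]\setminus X\ne\emptyset$, and $Q_\alpha$ witnesses $X\in m_0$.

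\emph{Part (ii).} Run exactly the same recursion with Miller trees playing the role of Laver trees and Sacks trees playing the role of Miller trees, using the analogue from the proof of Brendle's Theorem~2.2 in place of the pear/apple dichotomy: inside every Miller tree one isolates a distinguished Sacks-like subtree $P_M$, inside every Sacks tree one isolates a distinguished ``thin'' subtree $Q$, and $|[P_M]\cap[Q]|\le 1$. Enumerate $\{M_\alpha\}$ and $\{S_\alpha\}$; at stage $\alpha$ pick $Q_\alpha\subseteq S_\alpha$ of the second type avoiding $\{x_\beta:\beta<\alpha\}$ (possible since $\non(s_0)=\mathfrak{c}$), and distinct $x_\alpha,y_\alpha\in[P_{M_\alpha}]$ outside $\bigcup_{\beta\le\alpha}[Q_\beta]\cup\{x_\beta,y_\beta:\beta<\alpha\}$. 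The verification is identical in structure to (i). The main obstacle in carrying this plan out is bookkeeping rather than conceptual: in (i) one must verify that the apple/pear method of Theorem~\ref{cl0 a inne} applies to Laver trees with nonempty stem, and in (ii) one must pinpoint the exact tree dichotomy implicit in \cite[Thm.~2.2]{Brendle} and confirm its one-point intersection property; once those ingredients are in place, the transfinite recursion produces both Bernstein sets with the required ideal membership.
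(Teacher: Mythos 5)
Your proposal is correct and follows essentially the same route as the paper: a transfinite recursion of length $\mathfrak{c}$ that at each stage picks one point of a pear tree inside $L_\alpha$ to put into the set, a second point of $[L_\alpha]$ to keep out of it, and uses the pear/apple one-point-intersection bound together with $\non(m_0)=\mathfrak{c}$ to keep the set in $m_0$ (and the analogous Brendle Theorem~2.2 machinery for part (ii)). The only cosmetic difference is that you enumerate all Miller trees and choose an avoiding apple $Q_\alpha\subseteq M_\alpha$ at each stage, whereas the paper enumerates the apple trees directly and verifies membership in $m_0$ afterwards; both variants work.
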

\begin{proof}
As in in the proof of Theorem \ref{cl0 a inne} we will use notions established in \cite{Brendle}. To prove (i) let us enumerate all Laver trees $\{L_\alpha : \alpha<\mf{c}\}$ and all apple trees $\{A_\alpha: \alpha<\mf{c}\}$. Let us construct two sequences: $(b_\alpha)_{\alpha<\mf{c}}$ and $(x_\alpha)_{\alpha<\mf{c}}$ such that for each $\alpha<\mf{c}$:
\begin{align*}
b_\alpha\in & [L_\alpha]\bez(\bigcup_{\beta<\alpha}[A_\beta]\cup \{x_\xi: \xi<\alpha\}),
\\
x_\alpha\in &[L_\alpha]\bez(\{b_\beta : \beta\leq \alpha\}\cup\{x_\beta : \beta<\alpha\}).
\end{align*}
It can be done, since for each Laver tree $L_\alpha$ there is a pear tree $P_{L_\alpha}$ for which $|[P_{L_\alpha}]\cap [A]|\leq 1$ for every apple tree $A$, so the set $[L_\alpha]\bez(\bigcup_{\beta<\alpha}[A_\beta]\cup \{x_\xi: \xi<\alpha\})$ is nonempty at each step $\alpha$. Then $B=\{b_\alpha: \alpha<\mf{c}\}$ is the desired set.
\\
To prove (ii) we use a similar modification of Theorem 2.2 from \cite{Brendle}.
\end{proof}
Analogously to the Question \ref{Question cl0 a l0} we may ask the following question.
\begin{problem}
Is there a $\mbb{CL}$-Bernstein set which belongs to $l_0$?
\end{problem}
Let us invoke a theorem by Miller from \cite{Miller1}.
\begin{theorem}[Miller]\label{Laver lub Hechler}
Let $A\in\Sigma^1_1$. Either $A$ contains body of some complete Laver tree or $A^c$ contains a body of some complete Hechler tree.
\end{theorem}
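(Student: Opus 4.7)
My plan is to analyze the $\Sigma^1_1$-tree representing $A$ via a transfinite derivative and build either a complete Laver tree in $A$ or a complete Hechler tree in $A^c$ by induction on this derivative. Fix a tree $T \subseteq (\omega \times \omega)^{<\omega}$ with $A = p[T]$, and define iteratively $T^{(0)} = T$, $T^{(\alpha+1)} = \{(s,t) \in T^{(\alpha)} : \{n : \exists m,\, (s^\frown n, t^\frown m) \in T^{(\alpha)}\} \text{ is infinite}\}$, and $T^{(\lambda)} = \bigcap_{\alpha < \lambda} T^{(\alpha)}$ for limit $\lambda$. This weakly decreasing transfinite sequence stabilises below $\omega_1$ at a fixed point $T^\infty$ with the property that every $(s,t) \in T^\infty$ admits infinitely many $n$'s with some $m$ making $(s^\frown n, t^\frown m) \in T^\infty$.

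If $(\emptyset,\emptyset) \in T^\infty$, I would recursively build the complete Laver tree $L$ together with an assignment $s \mapsto t_s$ (with $t_\emptyset = \emptyset$ and $(s, t_s) \in T^\infty$): at each $s \in L$, take as $L$-successors all of the infinitely many $n$'s witnessing the fixed-point property at $(s,t_s)$, and set $t_{s^\frown n} := t_s^\frown m$ for one such $m$. Infinite branching at every node (including the root $\emptyset$) makes $L$ a complete Laver tree, and each branch $x \in [L]$ comes equipped with the witness $y = \bigcup_k t_{x|k} \in \omega^\omega$ satisfying $(x,y) \in [T]$, so $[L] \subseteq A$.

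If $(\emptyset,\emptyset) \notin T^\infty$, each $(s,t) \in T \setminus T^\infty$ admits an ordinal rank $\rho(s,t)$, namely the least $\alpha$ with $(s,t) \notin T^{(\alpha)}$ (necessarily a successor $\alpha = \beta + 1$). The finite set $F(s,t) := \{n : \exists m,\, (s^\frown n, t^\frown m) \in T^{(\beta)}\}$ then satisfies: for every $n \notin F(s,t)$ and every $m$, $\rho(s^\frown n, t^\frown m) < \rho(s,t)$, and in particular $(s^\frown n, t^\frown m) \notin T^\infty$. I would build the complete Hechler tree $H$ together with a finite "context" $C_s = \{t \in \omega^{|s|} : (s|k, t|k) \in T \text{ for all } k \leq |s|\}$ of $y$-prefix candidates at each node $s$, take as $H$-successors of $s$ the complement of the finite union $\bigcup_{t \in C_s} F(s,t)$ (hence cofinite), and invoke strict ordinal descent of $\rho_s := \max_{t \in C_s} \rho(s,t)$ along each branch. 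The context becomes empty after finitely many steps, and once $C_s = \emptyset$ no $y \in \omega^\omega$ can witness $x \in A$ for any $x \in [H]$ through that node, so $[H] \subseteq A^c$.

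The main obstacle is keeping the context finite at every node: since $T$ can have unbounded branching in the $y$-coordinate, the set of $y$-prefix candidates and the corresponding union $\bigcup_{t \in C_s} F(s,t)$ are a priori infinite, which would preclude cofinite branching. I would either first reduce to a $\Sigma^1_1$-representation in which the $y$-side is finitely branching (so that $|C_s|$ is finite of bounded size at each level and the bookkeeping is routine), or else track a finite selector from the context that nevertheless exhausts every potential $y$-witness in the limit via a diagonalisation schedule. Making one of these schemes go through cleanly is the genuine technical labour of the proof.
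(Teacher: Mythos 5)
The paper does not prove this statement---it is quoted from Miller's paper \cite{Miller1}---so there is no in-paper argument to compare against; I can only assess your sketch on its merits. The positive half is fine: the derivative you define does stabilise at a countable stage, the fixed point $T^\infty$ has the stated extension property, and if $(\emptyset,\emptyset)\in T^\infty$ the recursive construction of $L$ together with the witnesses $t_s$ genuinely yields a complete Laver tree with $[L]\subseteq p[T]=A$.

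The Hechler half, however, has a genuine gap, and neither of your proposed repairs closes it. Your reduction (first option) to a representation that is finitely branching in the $y$-coordinate is impossible in general: if every $T_x$ is finitely branching, then by K\"onig's lemma $p[T]=\bigcap_n\{x:(\exists t\in\omega^n)\,(x\restricted n,t)\in T\}$ is $G_\delta$, so this route proves the theorem only for $G_\delta$ sets, not for arbitrary analytic ones. The diagonalisation (second option) is also not just bookkeeping: a threat $t$ of length $k$ that you postpone until level $k'>k$ has by then proliferated into a possibly infinite set of extensions in $T_x$ of length $k'$, so ``activating'' it still forces you to avoid an infinite union of the finite sets $F(\cdot,\cdot)$, which is exactly the obstruction you started with. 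Worse, there is a structural issue your sketch does not address at all: the sets $T^{(\alpha)}$ are not subtrees of $T$ (deleting a node does not delete its extensions), so $(\emptyset,\emptyset)\notin T^\infty$ does not give $T^\infty=\emptyset$, and your rank $\rho$ is simply undefined on any pair $(x\restricted n,t)\in T^\infty$ met along a branch of $H$. This is fatal: if $(s,t)\in T^\infty$ for some $s\in H$, then by your own positive-case argument $A$ contains a Laver tree with stem $s$, and a complete Hechler tree through $s$ always shares a branch with such a Laver tree (cofinite meets infinite), so $[H]\not\subseteq A^c$. Hence the construction must in addition produce a complete Hechler tree inside $\{s:(\forall t)\,(s,t)\notin T^\infty\}$, and proving that this set is large enough to do so (given only that $(\emptyset,\emptyset)\notin T^\infty$) is a separate lemma---essentially a second rank analysis on the nodes $s$ alone, or an unfolding-and-determinacy argument in which the hidden $y$-moves are handled carefully. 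That missing idea is the real content of Miller's proof, and your sketch does not supply it.
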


\begin{theorem}\label{intersection of Borel and tree ideal}
The following is true:
\begin{enumerate}[(i)]
\item $\mc{B}\cap s_0$ is an ideal of Borel sets that don't contain a perfect subset (so it's an ideal of countable sets).
\item $\mc{B}\cap m_0$ is an ideal of Borel sets which don't contain a body of any Miller tree.
\item $\mc{B}\cap l_0$ is an ideal of Borel sets that don't contain a body of any Laver tree.
\end{enumerate}
\end{theorem}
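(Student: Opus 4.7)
The plan is to treat all three parts by a common template. In each part the forward inclusion is immediate from the definition of $t_0$: if $B$ is Borel and $[T]\se B$ for a tree $T$ of the relevant type, then every subtree $Q\se T$ of the same type has $[Q]\se [T]\se B$ nonempty, so $B\notin t_0$. The content lies in the reverse inclusions, which I would handle via a dichotomy tailored to each tree family.

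For the reverse inclusion in (i), I would invoke the perfect set theorem for Borel sets: a Borel set without a perfect subset is countable, so it suffices to verify that every countable $B\se\baire$ lies in $s_0$. Given a perfect tree $P$, the set $[P]\bez B$ is a co-countable subset of the perfect Polish space $[P]$, and standard Cantor-scheme pruning yields a perfect subtree $Q\se P$ with $[Q]\cap B=\0$.

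For the reverse inclusion in (iii), I would invoke Theorem \ref{Laver lub Hechler}. Let $B$ be Borel with no Laver body and fix $L\in\mbb{L}$. After passing to the portion of $L$ above $stem(L)$ and shifting $B$ accordingly, I may assume $L\in\mbb{CL}$. Enumerating $succ_L(s)=\{a_s(0)<a_s(1)<\dots\}$ at every $s\in L$ gives a tree-isomorphism $\phi\colon L\to\omega^{<\omega}$ and the induced homeomorphism $\Phi\colon[L]\to\baire$. I would apply Miller's dichotomy to the analytic set $\Phi(B\cap[L])\se\baire$. If this set contains the body of some $L_0\in\mbb{CL}$, then the pullback $\phi^{-1}(L_0)$ has successor set $\{a_s(i):i\in succ_{L_0}(\phi(s))\}\se succ_L(s)$ at each node, infinite because $L_0$ is complete Laver; thus $\phi^{-1}(L_0)$ is a complete Laver subtree of $L$ whose body lies in $B$, contradicting the hypothesis. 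Otherwise the complement contains the body of some $H_0\in\mbb{CH}$, and the analogous pullback $\phi^{-1}(H_0)$ has a cofinite subset of $succ_L(s)$ as its successor set at every node, still infinite, so it is again a complete Laver subtree of $L$, now with body disjoint from $B$.

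For the reverse inclusion in (ii), I would use the Kechris/Saint-Raymond dichotomy: any analytic subset of $\baire$ is either $K_\sigma$ or contains the body of a Miller tree. Given $M\in\mbb{M}$ and $B$ Borel with no Miller body, $B\cap[M]$ contains no Miller body (which would itself witness a Miller body inside $B$), so it is $K_\sigma$; writing $B\cap[M]=\bigcup_n K_n$ with $K_n$ compact and therefore dominated by some $g_n\in\baire$, a routine fusion through the $\omega$-splitting layers of $M$ -- at stage $n$ retaining only successors that exceed $g_n$ at the current coordinate -- produces a Miller subtree $M'\se M$ with $[M']\cap K_n=\0$ for every $n$ and hence $[M']\cap B=\0$. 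The main obstacle is the pullback bookkeeping in part (iii), where I must verify that the single map $\Phi$ transports both complete Laver and complete Hechler structures into complete Laver structures on $L$; once this is in hand, the rest is routine given the dichotomies invoked, and (i) reduces entirely to classical facts.
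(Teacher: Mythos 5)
Your argument is correct and rests on the same two dichotomies that the paper's proof uses --- the Kechris/Saint-Raymond theorem (every analytic set is either $\sigma$-bounded or contains a superperfect set) for (ii), and Miller's Laver/Hechler dichotomy (Theorem \ref{Laver lub Hechler}) for (iii) --- but you extract the witnessing subtrees differently. For (ii) the paper applies the dichotomy a \emph{second} time, to $[T]\bez B$: if that set were $\sigma$-bounded then $[T]=(B\cap[T])\cup([T]\bez B)$ would be $\sigma$-bounded as well, which is impossible for a superperfect $[T]$; hence $[T]\bez B$ contains a superperfect set and so a Miller subtree of $T$, with no fusion needed. You instead cover $B\cap[M]$ by compacta dominated by functions $g_n$ and build the subtree by hand; this works, but the one-line recipe ``at stage $n$ retain only successors exceeding $g_n$'' is too loose as stated, because a branch of the fusion may acquire most of its splitting nodes only at stages far beyond $n$ --- you must arrange that every branch escapes \emph{every} $g_n$, e.g.\ by demanding that each successor frozen at stage $n$ exceed $\max(g_0,\dots,g_n)$ at the relevant coordinate. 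For (iii) the paper performs the same stem-trimming but then applies Miller's theorem in the ambient $\baire$ and simply intersects the resulting complete Hechler tree $H$ with $L'$, observing that a cofinite subset of an infinite successor set is infinite, so $H\cap L'$ is already a complete Laver subtree of $L'$ avoiding $B'$; your detour through the canonical isomorphism $[L]\cong\baire$ proves the same thing with a little more bookkeeping, the payoff being that the tree you obtain is automatically a subtree of $L$ and that the dichotomy is applied to the relativized set $\Phi(B\cap[L])$ rather than to all of $B'$. Part (i) and all the easy inclusions are exactly as in the paper (which dismisses (i) as evident).
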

\begin{proof}
(i) is evident.
\\
(ii) follows by the fact that any analytic set is either $\sigma$ - bounded or contains a superperfect set. If a Borel set contains a superfect set then clearly it's not in $m_0$. On the other hand, if for some Miller tree $T$ and $\sigma$ - bounded Borel a set $B$ $[T]\bez B$ didn't contain a superperfect set, then $[T]$ would be $\sigma$ - bounded too. A contradiction.
\\
(iii): If a Borel set $B$ contains a body of some Laver tree, then clearly $B\notin l_0$. If it doesn't contain a Laver tree, but there is a Laver $L$ for which each body of Laver subtree of $L$ has a nonempty intersection with $B$, then let us trim $B$ and $L$ in the following way:
\begin{eqnarray*}
B'&=&\{x\in\baire: stem(L)^\frown x\in B\},
\\
L'&=&\{x\in\baire:  stem(L)^\frown x\in L\}.
\end{eqnarray*}
A function $f: \baire\rightarrow \baire$ given by the formula $f(x)=stem(L)^\frown x$ is continuous. Clearly, $B'=f^{-1}[B]$, so $B'$ is Borel, and $[L']=f^{-1}[[L]]$ is a body of a complete Laver tree $L'$. $B'$ still doesn't contain a body of any Laver tree, so by Theorem \ref{Laver lub Hechler} there is a Hechler tree $H$ which body is contained in $B'^c$. $H\cap L'$ contains (in fact - is) a Laver tree, body of which $B'$ should intersect - a contradiction.
\end{proof}
\begin{definition}
We say that a set $A$ is $\mc{I}$-nonmeasurable if $A\notin \sigma(\mc{B}\cup \mc{I})$. $A$ is completely $\mc{I}$-nonmeasurable if $A\cap B$ is $\mc{I}$-nonmeasurable for each Borel set $B\notin \mc{I}$, or equivalently - $A$ intersects each, but doesn't contain any, Borel $\mc{I}$-positive set.
\end{definition}
\begin{cor}
Let $(\mbb{T}, t_0)\in\{(\mbb{S}, s_0), (\mbb{M}, m_0), (\mbb{L}, l_0)\}$. Then a set $B$ is a $\mbb{T}$-Bernstein iff it is completely $t_0\cap\mc{B}$-nonmeasurable. 
\end{cor}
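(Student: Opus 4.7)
The plan is to route both implications through Theorem \ref{intersection of Borel and tree ideal} together with the second, equivalent formulation of complete $(t_0\cap\mc{B})$-nonmeasurability supplied in the preceding definition: namely, that $B$ intersects every Borel $(t_0\cap\mc{B})$-positive set and contains no such set. The crucial translation furnished by Theorem \ref{intersection of Borel and tree ideal} is that, for each of the three listed pairs $(\mbb{T},t_0)$, a Borel set $D$ lies outside $t_0$ if and only if $D\supseteq [T]$ for some $T\in\mbb{T}$ (in the Sacks case this is the standard perfect-set dichotomy, while in the Miller and Laver cases it is exactly clauses (ii) and (iii) of the theorem). Since every $[T]$ is itself Borel, there is no distinction between $t_0$-positivity and $(t_0\cap\mc{B})$-positivity on the Borel side.

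For the ($\Rightarrow$) direction, I would assume $B$ is $\mbb{T}$-Bernstein and take an arbitrary Borel set $D\notin t_0$. Theorem \ref{intersection of Borel and tree ideal} produces $T\in\mbb{T}$ with $[T]\se D$, and the two Bernstein clauses $B\cap [T]\neq\0$ and $[T]\setminus B\neq\0$ then yield $B\cap D\neq\0$ and $D\not\se B$ at once. As $D$ was an arbitrary Borel $(t_0\cap\mc{B})$-positive set, this is precisely the second form of complete nonmeasurability.

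For the ($\Leftarrow$) direction, I would fix any $T\in\mbb{T}$, note that $[T]$ is a closed (hence Borel) set which trivially contains the body of a tree from $\mbb{T}$ (itself), and conclude from Theorem \ref{intersection of Borel and tree ideal} that $[T]\notin t_0$. Feeding $D:=[T]$ into complete $(t_0\cap\mc{B})$-nonmeasurability gives $B\cap [T]\neq\0$ and $[T]\not\se B$, which is exactly the $\mbb{T}$-Bernstein condition.

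I do not expect any real obstacle. Once Theorem \ref{intersection of Borel and tree ideal} has identified the Borel $(t_0\cap\mc{B})$-positive sets with the Borel sets that swallow a body of some tree from $\mbb{T}$, both implications reduce to a one-line unpacking of the two definitions; the only point that warrants attention is the symmetric reading of the Bernstein clause (i.e.\ that $[T]\not\se B$ holds for every $T\in\mbb{T}$), which is the same reading needed to justify the earlier remark that no $\mbb{T}$-Bernstein set is $t$-measurable.
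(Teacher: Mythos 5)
Your proof is correct and follows essentially the same route as the paper: both identify, via Theorem \ref{intersection of Borel and tree ideal}, the Borel $t_0$-positive sets with the Borel sets containing a body of some tree from $\mbb{T}$, and then unwind the two definitions (your version merely spells out the two implications that the paper compresses into one sentence). Your explicit reading of the Bernstein clause as $[T]\not\se B$ is the same reading the paper intends, so there is no gap.
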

\begin{proof}
By Theorem \ref{intersection of Borel and tree ideal} a set $A$ is $t_0\cap\mc{B}$-positive Borel set if and only if it contains a body of some tree from $\mbb{T}$, so a set $B$ is $\mbb{T}$-Bernstein if and only if it intersects each each, but does not contain any, Borel set containing a body of a tree from $\mbb{T}$.
\end{proof}

\section{$\mc{I}$-Luzin sets and algebraic properties}
	
Let us recall the notion of $\mathcal I$-Luzin sets. Let $X$ be a Polish space and $\mc{I}$ be an ideal.
\begin{definition}
		We say that a set $L$ is an $\mathcal I$-Luzin set if $(\forall A\in\mathcal I)(|A\cap L|<|L|)$. 
\end{definition}
For classic ideals of Lebesgue measure zero sets $\mc{N}$ and meager sets $\mc{M}$ we will call $\mc{M}$-Luzin sets generalized Luzin sets and $\mc{N}$-Luzin sets generalized Sierpiński sets.
 
In \cite{Wohofsky} the following result was proven.
\begin{theorem}[Wohofsky]
		There is no $s_0$-Luzin set.
\end{theorem}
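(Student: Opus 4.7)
The plan is to proceed by contradiction: suppose $L$ is an $s_0$-Luzin set and produce some $A\in s_0$ with $|A\cap L|=|L|$, violating the defining inequality. The first step is to observe that $|L|=\mathfrak{c}$. Since every perfect tree contains $\mathfrak{c}$ pairwise disjoint perfect subtrees, any set of cardinality $<\mathfrak{c}$ can be avoided by one of them and so belongs to $s_0$; that is, $\non(s_0)=\mathfrak{c}$. If $|L|<\mathfrak{c}$, then $L\in s_0$, and taking $A=L$ gives $|A\cap L|=|L|$, a contradiction. So $|L|=\mathfrak{c}$.

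The heart of the argument is the claim that every $L\subseteq\omega^\omega$ of cardinality $\mathfrak{c}$ contains a subset $L'\subseteq L$ with $|L'|=\mathfrak{c}$ and $L'\in s_0$; then $A=L'$ finishes the job. To build $L'$, enumerate all perfect trees as $\{P_\alpha:\alpha<\mathfrak{c}\}$ and construct, by transfinite recursion, a sequence $(x_\alpha)_{\alpha<\mathfrak{c}}\subseteq L$ together with perfect subtrees $Q_\alpha\subseteq P_\alpha$ satisfying
$$
[Q_\alpha]\cap\{x_\beta:\beta\le\alpha\}=\emptyset
\quad\text{and}\quad
x_\alpha\in L\setminus\Bigl(\{x_\beta:\beta<\alpha\}\cup\bigcup_{\beta<\alpha}[Q_\beta]\Bigr).
$$
Setting $L'=\{x_\alpha:\alpha<\mathfrak{c}\}$, the first clause forces $L'\in s_0$, witnessed by the family $(Q_\alpha)$, and the second forces $|L'|=\mathfrak{c}$.

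The main obstacle is keeping the recursion alive. Producing $Q_\alpha$ is routine, because $P_\alpha$ has $\mathfrak{c}$ pairwise disjoint perfect subtrees and the forbidden set $\{x_\beta:\beta<\alpha\}$ has cardinality $<\mathfrak{c}$. Producing $x_\alpha$ is the delicate point, since $\bigcup_{\beta\le\alpha}[Q_\beta]$ is a union of perfect sets whose trace on $L$ could in principle already exhaust $L$. My approach would be to refine the choice of each $Q_\beta$ so as to minimise $|L\cap[Q_\beta]|$ whenever possible: if every perfect tree $P$ admits a perfect subtree $Q\subseteq P$ with $|L\cap[Q]|<\mathfrak{c}$, a careful cardinality count keeps the bad union strictly smaller than $|L|$ throughout the recursion. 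The remaining case, in which some perfect tree $P^*$ is \emph{$L$-saturated} in the sense that every perfect subtree $Q\subseteq P^*$ has $|L\cap[Q]|=\mathfrak{c}$, is handled by working inside $[P^*]$: $\mathfrak{c}$ pairwise disjoint perfect subtrees of $P^*$ yield a transversal $\{z_\alpha:\alpha<\mathfrak{c}\}\subseteq L\cap[P^*]$, and a diagonal selection against the enumeration of all perfect trees arranges that this transversal itself lies in $s_0$, again contradicting the Luzin property.
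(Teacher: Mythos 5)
Your reduction is the right one, and it matches the strategy the paper itself uses for the sibling theorem (no $l_0$-, $cl_0$-, $m_0$-Luzin sets): it suffices to show that every $L$ of size $\mathfrak c$ contains an $s_0$-subset of size $\mathfrak c$, and your verification that the recursion, \emph{if it can be carried out}, produces such a subset is correct. The trouble is that the step you yourself single out as delicate --- keeping $L\setminus\bigl(\{x_\beta:\beta<\alpha\}\cup\bigcup_{\beta<\alpha}[Q_\beta]\bigr)$ nonempty --- is the entire content of the theorem, and neither branch of your case split actually supplies it. Your Case A is vacuous in a way you do not exploit: if every perfect tree has a perfect subtree $Q$ with $|L\cap[Q]|<\mathfrak c$, then (since every perfect set contains $\mathfrak c$ pairwise disjoint perfect subsets, so a trace of size $<\mathfrak c$ can be avoided by one of them) $Q$ can be shrunk further to a perfect subtree whose body misses $L$ entirely; that is, Case A simply says $L\in s_0$, and one may take $L'=L$ with no recursion at all. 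The ``careful cardinality count'' you invoke there is therefore unnecessary --- and, as stated, also invalid: a union of $|\alpha|<\mathfrak c$ sets each of size $<\mathfrak c$ need not have size $<\mathfrak c$ when $\mathfrak c$ is singular, and the theorem carries no regularity hypothesis (contrast the paper's $L+S$ theorem, which does assume $\mathfrak c$ regular for exactly this kind of reason).

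Case B --- some $P^*$ with $|L\cap[Q]|=\mathfrak c$ for every perfect $Q\subseteq P^*$, which by the above is precisely the case $L\notin s_0$ --- is therefore where the whole theorem lives, and there you only assert that ``a diagonal selection \dots arranges that this transversal itself lies in $s_0$.'' That is the statement to be proved, and it faces exactly the obstacle you flagged: a transversal with one point of $L$ in each member of a pairwise disjoint family $\{[R_\gamma]:\gamma<\mathfrak c\}$ is not automatically in $s_0$ (a perfect subtree of $P^*$ can meet $\mathfrak c$ many of the $[R_\gamma]$ and hence $\mathfrak c$ many transversal points), and when you reserve avoiding subtrees $Q_\beta$ during the selection, nothing you have said prevents $\bigcup_{\beta<\alpha}[Q_\beta]$ from swallowing $L\cap[R_\gamma]$ for the indices $\gamma$ you still need. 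The missing idea is to control how each $[P_\beta]$ sits relative to the disjoint family: either it contains a perfect subset of a single $[R_\gamma]$ (reserve $[Q_\beta]$ inside that one $[R_\gamma]$, so it threatens at most one transversal point), or it contains a perfect subset disjoint from $\bigcup_\gamma[R_\gamma]$, or it meets every $[R_\gamma]$ in a countable set, in which case the reserved trees delete only $|\alpha|\cdot\aleph_0<\mathfrak c$ points from any $L\cap[R_\gamma]$ --- harmless because $|L\cap[R_\gamma]|=\mathfrak c$ in this case. Without bookkeeping of this kind the recursion is not known to survive, so the proposal has a genuine gap at its central step. (For the record, the paper does not prove this statement itself but cites Wohofsky; its proof of the analogues for the other tree ideals runs the same recursion against a maximal antichain of subtrees, and the corresponding work is compressed there as well.)
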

	
We will show that similar results can be obtained for other tree ideals.	

\begin{theorem}
	    The following statements are true.
		\begin{enumerate}[(i)]
			\item There is no $l_0$-Luzin set.
			\item There is no $cl_0$-Luzin set.
			\item There is no $m_0$-Luzin set.
		\end{enumerate}
\end{theorem}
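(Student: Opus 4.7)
The plan is to derive a contradiction in each case by producing a set $A\in\mc{I}$ with $|A\cap L|=\kappa$, contradicting the Luzin property. First I would observe that substituting $A=L$ into the defining inequality forces $L\notin\mc{I}$, so $\kappa=|L|\ge\non(\mc{I})\ge\aleph_1$. The common task is then to exhibit such an $A$, and the three ideals admit parallel but slightly different treatments.

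For part (ii), my plan is to reuse the partition from the joke proposition: $\baire=\bigsqcup_n C_n$ with $C_n=\{x\in\baire:x(0)=n\}\in cl_0$, each $C_n$ being avoided by the complete Laver subtree of any given complete Laver tree obtained by discarding the root successor labelled $n$. Since $cl_0$ is closed under subsets, $L\cap C_n\in cl_0$, so the Luzin hypothesis gives $|L\cap C_n|<\kappa$ for every $n$. The countable partition $L=\bigsqcup_n (L\cap C_n)$ then forces $\cofin(\kappa)=\aleph_0$. Combined with $\kappa\le\c$ and the already established $\cofin(\c)>\aleph_0$, this immediately disposes of $\kappa=\c$. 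For $\kappa<\c$ of countable cofinality I would iterate the partition at every depth $k\ge 1$ (using $\baire=\bigsqcup_{s\in\omega^k} C_s$ with each $C_s\in cl_0$ for $s\ne\emptyset$) and choose a branch $x\in\baire$ along which the sizes $|L\cap C_{x\restricted k}|$ form a decreasing chain whose asymptotic behavior is incompatible with the hypothesized cofinality and the Luzin bound.

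For parts (i) and (iii), the key ingredient I would establish is that every $K_\sigma$-subset of $\baire$ lies in $l_0\cap m_0$. Indeed, given a bounded set $K\se\prod_n[0,f(n)]$ and any Laver (resp.\ Miller) tree $T$, I would retain at each $\omega$-splitting node $t\in T$ only those successors exceeding $f(|t|)$; the resulting Laver (resp.\ Miller) subtree then has every branch exceeding $f$ at some coordinate and therefore disjoint from $K$, and closing under countable unions yields $K_\sigma\se l_0\cap m_0$. Fixing a dominating family $D$ of size $\d$, the cover $\baire=\bigcup_{f\in D}\{x:x\le^* f\}$ expresses $\baire$ as a union of $\d$ many $K_\sigma$-sets in $l_0\cap m_0$, and by Luzin each intersects $L$ in fewer than $\kappa$ elements, so $|L|\le\d\cdot\sup_{f\in D}|L\cap\{x:x\le^* f\}|$. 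When $\d<\kappa$ and $\kappa$ is regular this already yields $\kappa<\kappa$, a contradiction.

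The hard part will be the case $\d\ge\kappa$, in particular $\d=\kappa=\c$, where the covering argument does not close by cardinality alone. There I would perform a direct fusion inside a suitable Laver (resp.\ Miller) tree to build a single $K_\sigma$-set $A=\bigcup_n K_n\in l_0\cap m_0$ with $|A\cap L|=\kappa$: at stage $n$ the Luzin property (telling us that $L$ is not contained in any bounded set) supplies a fresh bounded piece $K_n$ together with new elements of $L\cap K_n$, while fusion-pruning the tree keeps $A$ in the ideal. This step is the principal technical difficulty and mirrors Wohofsky's construction for the $s_0$-Luzin case.
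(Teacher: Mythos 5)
Your reduction is right as far as it goes, but it stops too early: for each of $l_0$, $m_0$, $cl_0$ \emph{every} set of cardinality less than $\mathfrak{c}$ already belongs to the ideal (inside any tree of the relevant kind one finds $\mathfrak{c}$ many subtrees of the same kind with pairwise disjoint bodies, by splitting the successor sets of the retained splitting nodes), so from $L\notin\mc{I}$ you get $|L|=\mathfrak{c}$, not merely $|L|\ge\aleph_1$. With that observation your part (ii) is complete and correct, and genuinely different from the paper's: the countable partition into the sets $C_n\in cl_0$ gives $\mathfrak{c}=|L|\le\aleph_0\cdot\sup_n|L\cap C_n|$ with each $|L\cap C_n|<\mathfrak{c}$, contradicting $\cofin(\mathfrak{c})>\aleph_0$. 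This is shorter than the paper's construction (a maximal antichain of complete Laver subtrees plus a transversal-avoiding recursion). The case $\kappa<\mathfrak{c}$ that you try to treat by iterating the partition is vacuous, and the ``asymptotic behaviour'' step you sketch there is not an argument in any case, so it should simply be dropped.

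For (i) and (iii) there is a genuine gap. The covering argument closes only when $\d<\cofin(\mathfrak{c})$: you need the supremum of $\d$ many cardinals below $\mathfrak{c}$ to stay below $\mathfrak{c}$, and you may not assume $\kappa=\mathfrak{c}$ is regular. Worse, the strategy you propose for the remaining (and main) case, e.g.\ $\d=\mathfrak{c}$, cannot work even in principle: if $L$ is $l_0$-Luzin then each bounded closed piece $K_n$ lies in $l_0$, hence meets $L$ in fewer than $\mathfrak{c}$ points, and since $\cofin(\mathfrak{c})>\aleph_0$ the countable union $A=\bigcup_n K_n$ still satisfies $|A\cap L|<\mathfrak{c}$; no fusion can produce a $K_\sigma$ set catching $\mathfrak{c}$ many points of $L$. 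The paper avoids $K_\sigma$ sets altogether: assuming $L\notin l_0$ it fixes a Laver tree $T$ every Laver subtree of which meets $L$ in a set of size $\mathfrak{c}$, takes a maximal antichain $\{L_\alpha:\alpha<\mathfrak{c}\}$ of Laver subtrees of $T$, recursively picks $a_\alpha\in L\setminus\bigcup_{\xi<\alpha}[L_\xi]$, and argues that $A=\{a_\alpha:\alpha<\mathfrak{c}\}\subseteq L$ meets each $[L_\alpha]$ in fewer than $\mathfrak{c}$ points and therefore belongs to $l_0$ --- a subset of $L$ of full size inside the ideal, which contradicts Luzin-ness directly. You need an argument of this antichain/transversal type for (i) and (iii); the dominating-family route does not reach them.
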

	\begin{proof}
		Let us consider  $l_0$ case. We will prove that for every set  $X$ of cardinality $\mathfrak c$ there exists a set $A\subseteq X$ such that  $A\in l_0$ and $|A|=\mathfrak c$. Indeed, let us assume that $X\notin l_0$. Then there exists  $L\in\mathbb{L}$ such that for every $L'\subseteq L$, $L'\in\mathbb{L}$ we have $|[L]\cap X|=\mathfrak c$.
			Let us now fix a maximal antichain $\{L_\alpha:\alpha<\mathfrak c\}$ of Laver trees contained in $L$ such that $|[L_\alpha]\cap X|=\mathfrak c$.
			Let us construct a sequence $(a_\alpha)_{\alpha<\mf{c}}$ such that for each $\alpha<\mf{c}$:			
			\[
			a_\alpha \in X\setminus \bigcup_{\xi<\alpha}[L_\alpha].
			\]			 
			Then $A=\{a_\alpha:\ \alpha<\mathfrak c\}$ is the set. Proofs of the other cases are almost identical.
\end{proof}
Now we will consider $\mc{I}$-Luzin sets in a context of algebraic properties and tree ideals. We will work on the real line $\mbb{R}$ with addition. Since $\mbb{R}$ is $\sigma$-compact, it does not contain even superperfect sets. We will tweak the definition a bit by saying that $A\se \mbb{R}$ belongs to $t_0$ if $h^{-1}[A]$ belongs to $t_0$ in $\baire$, where $h$ is a homeomorphism between $\baire$ and a subspace of  irrational numbers (see \cite{KyWe} for a similar modification in the case of $2^\omega$). Having this in mind we will usually mean by $[\tau]$, $\tau\in\omega^{<\omega}$, an open interval of rational endpoints on $\mbb{R}$.

Before we proceed let us define a non-standard kind of fusion of Miller and Laver trees, that we will use later. Let $T$ be a Miller tree. Let $\tau_\0\in \omega$-$split(T)$ and let $T_0$ be any Miller subtree of $T$ such that $\tau_\0$ remains an infinitely splitting node in $T_0$. Suppose we have a Miller subtree $T_n$ and a set of nodes $B_n=\{\tau_\sigma: \sigma\in n^{\leq n}\}$ such that
\begin{enumerate}[(i)]
\item $\tau_\sigma\in\omega$-$split(T_n)$ for every $\sigma\in n^{\leq n}$;
\item $\tau_{\sigma^\frown k}\es\tau_\sigma$ for every $k<n$ and $\sigma\in n^{<n}$;
\item $\tau_{\sigma^\frown k}\cap\tau_{\sigma^\frown j}=\tau_\sigma$ for every $\sigma\in n^{<n}$ and distinct $k, j<n$.
\end{enumerate}
We extend the set of nodes $B_n$ to $B_{n+1}=\{\tau_\sigma: \sigma\in (n+1)^{\leq n+1}\}$ in a way that preserves above conditions, so we gonna have n+1 levels of infinitely splitting nodes with fixed n+1 splits. The only $\sigma\in (n+1)^0$ is $\0$, and $\tau_\0$ is an old node. It is $\omega$-$splitting$ in $T_n$ and $T_n$ is a Miller tree, so we may find $\tau_n\es \tau_\0$, which is $\omega$-$splitting$ and $\tau_n\cap \tau_j=\tau_\0$ for $j<n$. If we already have $\tau_\sigma$'s with desired properties for $\sigma\in (n+1)^{\leq k}$, $k<n+1$, then for $\tau_\sigma$, $\sigma\in n^k$ (old node), we add $\tau_{\sigma^\frown n}$ such that conditions (i) - (iii) are still met. 
For a new node $\tau_\sigma$, $\sigma\in (n+1)^k\bez n^k$, we find $\tau_{\sigma^\frown j}$ for each $j<n+1$ such that conditions (i) - (iii) are satisfied too. Then let $T_{n+1}$ be any Miller subtree of $T_n$ for which nodes from $B_{n+1}$ are still infinitely splitting.
\\
We will call a sequence of trees $(T_n)_{n\in\omega}$ (or, interchangeably, their bodies $[T_n]$) derived that way a \textit{Miller fusion sequence}.
\\
Similarly we define a \textit{Laver fusion sequence}. The only difference would be that if $\tau_\sigma\se\tau_{\sigma^\frown k}$, then actually $\tau_{\sigma^\frown k}={\tau_{\sigma}}^\frown j$ for some $j\in\omega$.
\begin{proposition}
For every Miller (resp. Laver) fusion sequence $(T_n)_{n\in\omega}$ a set $\bigcap_{n\in\omega}T_n$ is a Miller (resp. Laver) tree.
\end{proposition}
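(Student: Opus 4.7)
My plan is to verify the two defining properties of a Miller (resp. Laver) tree for $T:=\bigcap_{n\in\omega}T_n$: closure under initial segments, and the existence of the requisite $\omega$-splitting skeleton. Closure is immediate because it is preserved under intersection. The content lies in the splitting, and the key observation is that every fusion node $\tau_\sigma$ (for $\sigma\in\omega^{<\omega}$) belongs to $T$ and is $\omega$-splitting inside $T$.

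For membership, let $n_0$ be the least stage with $\sigma\in n_0^{\leq n_0}$. Then $\tau_\sigma\in B_{n_0}\subseteq T_{n_0}$ by construction, and since $(T_n)_{n}$ is a decreasing sequence, $\tau_\sigma\in T_m$ for all $m\ge n_0$; for $m<n_0$ we use $T_m\supseteq T_{n_0}$. For the splitting property: in the Laver case the extra clause forces each $\tau_{\sigma^\frown k}$ to be an immediate successor of $\tau_\sigma$, and condition (iii) ensures they are pairwise distinct, yielding $\aleph_0$ immediate successors of $\tau_\sigma$ in $T$. In the Miller case, for each $k\in\omega$ let $a_k$ be the unique immediate successor of $\tau_\sigma$ lying below $\tau_{\sigma^\frown k}$; if $a_k=a_j$ for $k\ne j$, then $a_k$ would be a proper common extension of $\tau_\sigma$ sitting below $\tau_{\sigma^\frown k}\cap\tau_{\sigma^\frown j}=\tau_\sigma$, contradicting condition (iii). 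Hence the $a_k$'s are pairwise distinct, each lies in $T$ (as an initial segment of $\tau_{\sigma^\frown k}\in T$), and $\tau_\sigma$ has $\aleph_0$ immediate successors in $T$.

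To finish, I would argue that every $s\in T$ extends to some $\tau_\sigma$, so that together with the previous paragraph $\tau_\sigma$ serves as the $\omega$-splitting extension of $s$ required for the Miller property (and identifies $\tau_\emptyset$ as the stem in the Laver case). The idea is to trace $s$ upward through the skeleton, exploiting that at each stage $n$ the tree $T_n$ is Miller (resp. Laver) with distinguished infinitely-splitting set $B_n$: any extension of $s$ forced to remain in $T$ must eventually meet a node of $B_n$, and iterating produces a fusion node above $s$. This last step is the main obstacle, since the construction of $T_{n+1}$ inside $T_n$ has some latitude and one must rule out nodes of $T$ that indefinitely avoid the fusion skeleton. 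The argument is cleaner in the Laver case, because every immediate successor of a fusion node is itself a fusion node, so the skeleton already exhausts $T$ above the stem $\tau_\emptyset$.
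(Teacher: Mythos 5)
The paper states this proposition without proof, so there is nothing to compare your attempt against; judged on its own merits, your argument is correct exactly as far as you push it: every fusion node $\tau_\sigma$ does lie in $T=\bigcap_{n\in\omega}T_n$ (by monotonicity of the $T_n$'s) and is $\omega$-splitting there (your use of condition (iii) to separate the immediate successors $a_k$ is right). The step you defer at the end, however, is not a technical inconvenience but a genuine gap, and with the definition as literally written it cannot be closed. The construction only requires $T_{n+1}$ to be a Miller subtree of $T_n$ in which the nodes of $B_{n+1}$ still split infinitely; nothing prevents $T_{n+1}$ from carrying material incomparable with the skeleton, and that material can degenerate in the limit. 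For instance, take $T_0=\omega^{<\omega}$, $\tau_\emptyset=\emptyset$, arrange every $\tau_{(k)}$ to begin with the value $k+1$, and let the part of $T_{n+1}$ above the node $(0)$ consist of all extensions whose next coordinate is at least $n+1$: each $T_{n+1}$ is a legitimate choice, yet $(0)$ is a dead end of $\bigcap_{n\in\omega}T_n$, which is therefore not a Miller tree. Your proposed escape in the Laver case is also not sound: condition (iii) makes the \emph{designated} successors $\tau_{\sigma^\frown k}$ immediate successors of $\tau_\sigma$, but it does not prevent the intersection from retaining undesignated immediate successors of $\tau_\sigma$ above which the tree thins out, so the skeleton need not exhaust $T$ above $\tau_\emptyset$.

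What is true, and what Lemmas \ref{Fusion lemma for intervals} and \ref{Miller fusion lemma for G delta} actually need, is that the downward closure of the skeleton, $S=\{t:(\exists\sigma\in\omega^{<\omega})\ t\subseteq\tau_\sigma\}$, is a Miller (resp.\ Laver) tree contained in every $T_n$: each of its nodes is by definition an initial segment of some $\tau_\sigma$, which your second paragraph shows is $\omega$-splitting in $S$, and in the Laver case every node of $S$ extending $\tau_\emptyset$ is itself a skeleton node, giving the Laver property with stem $\tau_\emptyset$. So either the proposition should be read as asserting that $\bigcap_{n\in\omega}T_n$ \emph{contains} such a tree, or the definition of fusion sequence should be strengthened (e.g.\ by requiring every node of $T_{n+1}$ to be an initial segment of a node of $B_{n+1}$ or an extension of a maximal one); under such a strengthening your outline does close up. As submitted, the proof is incomplete at its decisive step.
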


\begin{lemma}\label{Fusion lemma for intervals}
For every sequence of intervals $(I_n)_{n\in\omega}$ and a Miller (resp. Laver) tree $T$ there is a Miller (resp. Laver) fusion sequence $(T_n)_{n\in\omega}$ such that for all $n>0$:
\[
\lambda([T_n]+I_n)< (1+\Sigma_{k=0}^{n-1}(n-1)^k)\lambda(I_n).
\]
\end{lemma}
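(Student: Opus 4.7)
The plan is to build the fusion sequence $(T_n)_{n\in\omega}$ inductively, pruning $T_{n-1}$ to $T_n$ so that $[T_n]\subseteq \mbb{R}$ is confined to a small union of short intervals. The key geometric observation is the following: under the homeomorphism between $\baire$ and the irrationals (used to transfer $t_0$ to $\mbb{R}$), for every node $\tau\in\omega^{<\omega}$ the basic intervals $[\tau^\frown k]$, $k\in\omega$, are pairwise disjoint subintervals of $[\tau]$ whose lengths tend to $0$ and which accumulate at one endpoint of $[\tau]$. Hence, for any $\delta>0$ there is an infinite $S\subseteq\omega$ such that $\bigcup_{k\in S}[\tau^\frown k]$ is contained in a single interval of length less than $\delta$, and retaining $\{\tau^\frown k : k\in S\}$ as the set of successors of $\tau$ is compatible with $\tau$ remaining $\omega$-splitting in a Miller or Laver subtree.

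At stage $n>0$, given $T_{n-1}$ and $B_{n-1}$, I follow the fusion procedure described before the lemma but with two extra freedoms. First, each newly added node $\tau_\sigma\in B_n\bez B_{n-1}$ is chosen as a sufficiently long extension of its predecessor --- in the Miller case by extending the string arbitrarily far, in the Laver case by picking an immediate successor with a large index --- so that $\lambda([\tau_\sigma])$ is arbitrarily small. Second, at every $\omega$-splitting node of $T_n$ the set of successors kept in $T_n$ is chosen using the geometric observation above so that these successors cluster inside a prescribed tiny subinterval of the parent's basic interval. Propagating these choices upward, I arrange for $[T_n]$ to be covered by exactly $|B_{n-1}|=\sum_{k=0}^{n-1}(n-1)^k$ pairwise disjoint intervals $J_1,\ldots,J_{|B_{n-1}|}$, one per node of $B_{n-1}$, each of length less than $\lambda(I_n)/|B_{n-1}|$.

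The required bound then follows at once from $\lambda(J_i + I_n) = \lambda(J_i) + \lambda(I_n)$:
\[
\lambda([T_n]+I_n) \;\le\; \sum_{i=1}^{|B_{n-1}|}\bigl(\lambda(J_i)+\lambda(I_n)\bigr) \;<\; \lambda(I_n)+|B_{n-1}|\lambda(I_n) \;=\; \Bigl(1+\sum_{k=0}^{n-1}(n-1)^k\Bigr)\lambda(I_n).
\]

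The main obstacle is the bookkeeping that reduces the naive cover of $[T_n]$ by $n^n$ tip-intervals of $B_n$ down to only $|B_{n-1}|$ intervals. This reduction works because several tips of $B_n$ share a common ancestor in $B_{n-1}$, so by appropriate earlier choices their corresponding intervals can be made to lie inside a single short interval near that ancestor. Consequently the induction must carry a strengthened hypothesis: at every stage $m$, $[T_m]$ already sits inside a prescribed family of $|B_{m-1}|$ small intervals determined by $B_{m-1}$, and the extension of $B_{m-1}$ to $B_m$ must be compatible with refining that family. In the Laver case, since new nodes are forced to be immediate successors of their predecessors, the ``long extension'' trick is unavailable and one must rely solely on the large-index clustering to localise the newly added structure above each $\tau_\sigma$.
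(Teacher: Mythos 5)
Your argument is essentially the paper's own proof: at each stage you attach to every node of the previous skeleton $B_{n-1}$ a short interval of length less than $\lambda(I_n)/|B_{n-1}|$ into which infinitely many successors' basic clopen intervals cluster, prune $T_n$ so its body is covered by these $|B_{n-1}|=\sum_{k=0}^{n-1}(n-1)^k$ intervals, and conclude via $\lambda(J+I)=\lambda(J)+\lambda(I)$ and subadditivity --- exactly the paper's computation with the sets $N(I_\sigma)$. The proposal is correct (at the same level of detail as the original, including the remark that the Laver case localizes automatically since each retained immediate successor's whole subtree sits inside its basic interval), so no further comparison is needed.
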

\begin{proof}
Let us focus on a little more complicated "Miller" case. Let $I_0$ be an interval, $\lambda(I_0)=\epsilon_0$, $T$ a Miller tree. We proceed by induction on $n$. Let $\tau_\0\in\omega$-$split(T)$ such that $\lambda([\tau_\0])<\epsilon_0$. Then $\lambda([\tau_\0]+I_0)=\lambda([\tau_\0])+\lambda(I_0)<2\epsilon_0$. Let $T_0$ be Miller subtree of $T$ such that $\tau_\0=stem(T_0)$ and $\tau_\0\in\omega$-$split(T_0)$. Clearly, we have $\lambda([T_0]+I_0)<2\epsilon_0$. 
\\
Now assume that we have a tree $T_n$ that is an element of the emerging Miller fusion sequence, and associated with it set $B_n$ of fixed nodes satisfying conditions (i) - (iii). Let $\lambda(I_{n+1})=\epsilon_{n+1}$. Let us denote for each $\sigma\in\omega^{<\omega}$ and interval $I_\sigma$ a set
\[
N(I_\sigma)=\{{\tau_\sigma}^\frown k\in T_n: [{\tau_\sigma}^\frown k]\se I_\sigma\land(\forall j<n)(\tau_{\sigma^\frown j}\not\es {\tau_\sigma}^\frown k)\}.
\]
At each level $k<n$ for every $\sigma\in n^k$ let $I_\sigma$ be an interval with $\lambda(I_\sigma)<\frac{\epsilon_{n+1}}{(n+1)^{n}}$ such that a set $N(I_\sigma)$ is infinite and choose $\tau_{\sigma^\frown n}\in\omega$-$split(T_n)$ such that $\tau_{\sigma^\frown n}\es {\tau_\sigma}^\frown l$ for some ${\tau_\sigma}^\frown l\in N(I_\sigma)$.
At the level $n$ let us fix an intervals $I_\sigma$, $\lambda(I_\sigma)<\frac{\epsilon_{n+1}}{(n+1)^{n}}$, for $\sigma\in n^n$ such that sets $N(I_\sigma)$ are infinite and pick $\tau_{\sigma^\frown 0}, \tau_{\sigma^\frown 1}, ..., \tau_{\sigma^\frown n}$ which are extensions of some nodes ${\tau_\sigma}^\frown k_0, {\tau_\sigma}^\frown k_1, ..., {\tau_\sigma}^\frown k_n\in N(I_\sigma)$ respectively. Finally we pick remaining nodes to complete a set $B_{n+1}$ in the gist of our definition of Miller fusion sequence however we like. We take as $T_{n+1}$ any Miller subtree of $T_n$ for which nodes from $B_{n+1}$ are infinitely splitting and which body is covered by intervals $I_\sigma,\, \sigma\in n^{\leq n}$ (which is possible by infiniteness of each $N(I_\sigma)$).
\\
Let us approximate $\lambda([T_{n+1}]+I_{n+1})$:
\begin{align*}
\lambda([T_{n+1}]+I_{n+1})&\leq \lambda(\bigcup\{I_\sigma+I_{n+1}: \sigma\in n^{\leq n}\}\leq\Sigma_{\sigma\in n^{\leq n}}(\lambda(I_\sigma)+\lambda(I_{n+1}))<
\\
&< \Sigma_{\sigma\in n^{\leq n}}(\frac{\epsilon_{n+1}}{(n+1)^{n}}+\epsilon_{n+1}),
\end{align*}
and since the count of intervals $I_\sigma$ is  $|n^{\leq n}|=\Sigma_{k=0}^{n}n^k\leq (n+1)^{n}$, we have:
\begin{align*}
\lambda([T_{n+1}]+I_{n+1})&\leq \Sigma_{k=0}^{n}n^k(\frac{\epsilon_{n+1}}{(n+1)^{n}}+\epsilon_{n+1})\leq (n+1)^{n}\frac{\epsilon_{n+1}}{(n+1)^{n}}+\Sigma_{k=0}^{n}n^k\epsilon_{n+1}=
\\
&=\epsilon_{n+1}+\Sigma_{k=0}^{n}n^k\epsilon_{n+1}=(1+\Sigma_{k=0}^{n}n^k)\epsilon_{n+1}. 
\end{align*}
\end{proof}
\begin{remark}\label{Remark on freezing stem}
In the above Lemma in the case of a Laver tree we may demand that $stem(T)=stem(\bigcap_{n\in\omega}T_n)$, if $stem(T)$ is nonempty.
\end{remark}
\begin{proof}
The major difference is at the first step of the induction. Instead of picking a suitable "far enough" node $\tau_\0\in T$ such that $\lambda([\tau_\0]+I_0)<2\lambda(I_0)$, we already restrict the choice of nodes at the stem level by picking an interval $I_\0$ of measure $\lambda(I_\0)<\lambda(I_0)$ such that a set
\[
N(I_\0)=\{stem(T)^\frown k\in T: [stem(T)^\frown k]\se I_\0\}
\]
is infinite. It can be done since $stem(T)\neq\0$, so all clopens $[stem(T)^\frown k]$, $k\in\omega$, are contained in an interval. We take a Laver subtree $T_0$ of $T$ for which $[T]\se I_\0$ and $stem(T)=stem(T_0)$ (so all nodes extending $stem(T_0)$ come from $I_\0$). Then we continue analogously to the proof of the Lemma \ref{Fusion lemma for intervals}.
\end{proof}
\begin{lemma}\label{Miller fusion lemma for G delta}
There exists a dense $G_\delta$ set $G$ such that for each Miller (resp. Laver or complete Laver) tree $T$ there exists a Miller (resp. Laver or complete Laver) subtree $T'\se T$ such that $G+[T']\in\mc{N}$.
\end{lemma}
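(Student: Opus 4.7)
The plan is to build a single dense $G_\delta$ set $G$ that works uniformly for all three families. I would make $G=\bigcap_n V_n$, where each $V_n$ is an open set consisting of very narrow intervals around a countable dense subset of $\mbb{R}$, with widths chosen small enough to absorb the fusion constants $c_r:=1+\Sigma_{i=0}^{r-1}(r-1)^i$ appearing in Lemma~\ref{Fusion lemma for intervals}.

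Concretely, enumerate the rationals as $\{q_k:k\in\omega\}$ and fix a bijection $\pi:\omega\to\omega\times\omega$. For each $r$ with $\pi(r)=(n_r,k_r)$, pick $\delta_r>0$ so small that $2c_r\delta_r<2^{-(n_r+k_r+2)}$, and set $I_r:=(q_{k_r}-\delta_r,q_{k_r}+\delta_r)$, $V_n:=\bigcup_{k\in\omega}I_{\pi^{-1}(n,k)}$, and $G:=\bigcap_n V_n$. Each $V_n$ contains a neighborhood of every rational and is therefore dense open, so $G$ is a dense $G_\delta$ set by Baire.

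For a Miller (resp.\ Laver) tree $T$, feeding the sequence $(I_r)_{r\in\omega}$ into Lemma~\ref{Fusion lemma for intervals} yields a Miller (resp.\ Laver) fusion sequence $(T_r)_{r\in\omega}$ inside $T$ with $\lambda([T_r]+I_r)<2c_r\delta_r<2^{-(n_r+k_r+2)}$. Set $T':=\bigcap_r T_r$, a Miller (resp.\ Laver) subtree of $T$ by the proposition preceding Lemma~\ref{Fusion lemma for intervals}. Since $[T']\subseteq[T_r]$ for every $r$,
\[
\lambda([T']+V_n)\leq\sum_{k\in\omega}\lambda([T']+I_{\pi^{-1}(n,k)})<\sum_{k\in\omega}2^{-(n+k+2)}=2^{-n-1}.
\]
Because $G\subseteq V_n$ and hence $G+[T']\subseteq V_n+[T']$, we get $\lambda^*(G+[T'])\leq 2^{-n-1}$ for every $n$, so $G+[T']\in\mc{N}$.

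The main obstacle is the complete Laver case: the fusion naturally restricts $[T_0]$ to a small interval, which is incompatible with an empty stem whose countably many initial clopens are pairwise disjoint. The fix is to split $T$ along its initial successors. Let $N:=\{n\in\omega:\langle n\rangle\in T\}$ (infinite by completeness), and for $n\in N$ put $T^{(n)}:=\{\sigma\in T:\sigma(0)=n\}\cup\{\emptyset\}$, which is a Laver tree with $stem(T^{(n)})=\langle n\rangle$. Applying the Laver fusion together with Remark~\ref{Remark on freezing stem} to each $T^{(n)}$ produces a Laver subtree $S_n\subseteq T^{(n)}$ with $stem(S_n)=\langle n\rangle$ and $G+[S_n]\in\mc{N}$. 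Reassembling, $T':=\bigcup_{n\in N}S_n$ is a complete Laver subtree of $T$ (the initial successors of $\emptyset$ in $T'$ are exactly $N$, since $\langle n\rangle\in S_n$), and $G+[T']=\bigcup_{n\in N}(G+[S_n])$ is a countable union of null sets, hence null.
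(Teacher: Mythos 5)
Your proposal is correct and follows essentially the same route as the paper: a dense $G_\delta$ built from shrinking intervals around a countable dense set, with widths calibrated to beat the constants of Lemma~\ref{Fusion lemma for intervals}; a single fusion sequence through all the intervals whose intersection $T'$ inherits every bound $\lambda([T_r]+I_r)$; and, for the complete Laver case, the same decomposition of $T$ into the Laver subtrees above each first-level node, handled via Remark~\ref{Remark on freezing stem} and reassembled as a countable union of null sets. The only differences are bookkeeping (you index the intervals by a pairing function so each open layer $V_n$ is dense, where the paper uses the tail sets $\bigcup_{k>n}I_k$), plus a harmless off-by-one at $r=0$, where the fusion lemma only gives the factor $2$ rather than $c_0=1$; dropping that single term from the sums changes nothing.
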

\begin{proof}
Let $D=\{d_n: n\in\omega\}$ be a countable dense set, $G=\bigcap_{n\in\omega}\bigcup_{k>n}I_k$, where $I_k$ is an interval with center $d_k$ and $\lambda(I_k)<\frac{1}{(k)^{k-1}2^k}$. Proofs are almost identical in cases of Miller and Laver trees so let $T$ be a Miller tree. By the Lemma \ref{Fusion lemma for intervals} there is a Miller fusion sequence $(T_n)_{n\in\omega}$ such that
\[
\lambda([T_n]+I_n)<(1+\Sigma_{k=0}^{n-1}(n-1)^k)\lambda(I_n)\leq n^{n-1}\frac{1}{n^{n-1}2^n}=\frac{1}{2^n}.
\]
$T'=\bigcap_{n\in\omega}T_n$ is a Miller tree containing all $T_n$'s, so we may replace $[T_n]$ with $[T']$ in the above formula and it still holds. Then for fixed $n\in\omega$:
\[
\lambda(\bigcup_{k>n}I_k+[T'])= \lambda(\bigcup_{k>n}([T']+I_k))\leq\Sigma_{k>n}\lambda([T']+I_k)\leq\Sigma_{k>n}\frac{1}{2^k}=\frac{1}{2^n},
\]
so, given that $[T']+\bigcap_{n\in\omega}\bigcup_{k>n}I_k\se \bigcap_{n\in\omega}\bigcup_{k>n}([T']+I_k)$, we have:
\[
\lambda(G+[T'])\leq \lambda(\bigcap_{n\in\omega}\bigcup_{k>n}([T']+I_k))\leq \lim_{n\rightarrow\infty}\frac{1}{2^n}=0.
\]
In the case of a complete Laver tree $T$ let us observe that $T=\bigcup_{n\in\omega}T_n$, where $T_n=\{\sigma\in T: (n)\se\sigma\lor\sigma\se (n)\}$ is a Laver tree with a nonempty stem. Let us notice that $[T]=\bigcup_{n\in\omega}[T_n]$. By the Lemma \ref{Fusion lemma for intervals}, Remark \ref{Remark on freezing stem}, and using the first part of the proof we find for each (nonempty) $T_n$ a Laver subtree $T_n'$ which shares the stem with $T_n$ and for which we have:
\[
[T_n']+G\in\mc{N}.
\]
Then $T'=\bigcup_{n\in\omega}T_n'$ is a complete Laver subtree of $T$ and:
\[
[T']+G=[\bigcup_{n\in\omega}T_n']+G=\bigcup_{n\in\omega}[T_n']+G=\bigcup_{n\in\omega}([T_n']+G)\in\mc{N}
\]
as a countable union of null sets.
\end{proof}
Before we proceed to the main theorem of this section let us recall a generalized version of Rothberger's theorem (see \cite{Roth}).
\begin{theorem}(Essentially Rothberger)\label{Rothberger theorem}
Assume that generalized Luzin set $L$ and generalized Sierpiński set $S$ exist. Then, if $\kappa=max\{|L|, |S|\}$ is a regular cardinal, $|L|=|S|=\kappa$.
\end{theorem}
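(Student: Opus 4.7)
My plan is to pinch $|S|$ between its upper bound $\kappa$ and a lower bound obtained through a short chain of Cichoń's-diagram inequalities triggered by the Luzin hypothesis on $L$. By the symmetric roles of $\mc{M}$ and $\mc{N}$ in the hypotheses, I will assume without loss of generality that $|L|=\kappa$, so $|S|\leq \kappa$.

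The first step is to observe that the existence of a generalized Luzin set of regular cardinality $\kappa$ forces $\cov(\mc{M})\geq \kappa$. Indeed, any covering $\mbb{R}=\bigcup_{\alpha<\lambda}M_\alpha$ by fewer than $\kappa$ meager sets would decompose $L=\bigcup_{\alpha<\lambda}(L\cap M_\alpha)$ into $\lambda<\kappa$ pieces, each of cardinality strictly less than $|L|=\kappa$ by the Luzin property; regularity of $\kappa$ would then force $|L|<\kappa$, a contradiction.

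The second step is to invoke the ZFC inequality $\cov(\mc{M})\leq\non(\mc{N})$ from Cichoń's diagram, yielding $\non(\mc{N})\geq\kappa$. To close the loop, I would note that any generalized Sierpiński set $S$ is automatically non-null: otherwise, applying its defining inequality with $N=S$ gives $|S\cap S|=|S|<|S|$, a contradiction. Therefore $|S|\geq\non(\mc{N})\geq\kappa$, and combined with $|S|\leq\kappa$ this forces $|S|=\kappa=|L|$.

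I do not anticipate a serious obstacle; the only point requiring care is locating precisely where regularity of $\kappa$ is used, namely in the first step, when summing $\lambda<\kappa$ cardinals each below $\kappa$ and staying below $\kappa$. Had the maximum instead been realized by $|S|$, an entirely symmetric argument using $\cov(\mc{N})\leq\non(\mc{M})$ together with the non-meagerness of $L$ (for the same reason $S$ is non-null) would deliver the conclusion.
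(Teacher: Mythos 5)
Your argument is correct, but it is organized differently from the paper's. The paper argues directly: it takes a meager set $M$ of full measure, notes that the Sierpi\'nski property forces $M+S=\mbb{R}$, writes $L=\bigcup_{s\in S}\bigl(L\cap(M+s)\bigr)$ as a union of $|S|<\kappa$ pieces each of size $<\kappa$ (by the Luzin property, since each $M+s$ is meager), and derives the contradiction $\kappa<\kappa$ from regularity. You instead factor this through Cicho\'n's diagram: the Luzin set of regular size $\kappa$ gives $\cov(\mc{M})\geq\kappa$, the Rothberger inequality gives $\non(\mc{N})\geq\cov(\mc{M})$, and non-nullity of $S$ gives $|S|\geq\non(\mc{N})$; all three steps, including the observation that a generalized Sierpi\'nski set cannot be null, are sound, and the symmetric case via $\cov(\mc{N})\leq\non(\mc{M})$ is handled correctly. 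The two proofs are close relatives --- the standard proof of $\cov(\mc{M})\leq\non(\mc{N})$ is exactly the translation trick with a full-measure meager set that the paper uses inline --- but your version has the advantage of isolating where regularity enters (only in the bound $\cov(\mc{M})\geq\kappa$) and of making the statement a visible corollary of standard cardinal-invariant inequalities, at the cost of invoking $\cov(\mc{M})\leq\non(\mc{N})$ as a black box rather than keeping the proof self-contained.
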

\begin{proof}
Assume that $\kappa=|L|>|S|$ and $\kappa$ is a regular cardinal. Let $M$ be a meager set of full measure (the Marczewski decomposition). Then
\[
\kappa=|L\cap\mbb{R}|=|L\cap (M+S)|=|\bigcup_{s\in S}(L\cap(M+s))|<\kappa,
\]
by regularity of $\kappa$. In the case of $\kappa=|S|>|L|$ the proof is almost the same. 
\end{proof}
The following theorem extends the result obtained in \cite{MZ}.
\begin{theorem}
Let $\c$ be a regular cardinal and $t_0\in\{s_0, m_0, l_0, cl_0\}$. Then for every generalized Luzin set $L$ and generalized Sierpiński set $S$ we have $L+S\in t_0$.
\end{theorem}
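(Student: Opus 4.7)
The plan is to split into a trivial case and a main case. If $\max(|L|,|S|)<\mf{c}$, then $|L+S|\le|L|\cdot|S|<\mf{c}$, so $L+S\in t_0$ because $\non(t_0)=\mf{c}$ for each of the four ideals. Otherwise, by Theorem~\ref{Rothberger theorem} applied to the regular cardinal $\mf{c}$, we may assume $|L|=|S|=\mf{c}$. In that case, given a tree $T$ in the family $\mbb{T}$ corresponding to $t_0$, I would use Lemma~\ref{Miller fusion lemma for G delta} (extended to Sacks trees for the $s_0$ case) to pass to a subtree $T'\se T$ in $\mbb{T}$ with $G+[T']\in\mc{N}$, taking $G$ symmetric (replace it by $G\cap(-G)$ if necessary; this is still a dense $G_\delta$ and $G\cap(-G)+[T']\se G+[T']$). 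I would then show $|(L+S)\cap[T']|<\mf{c}$, so that $(L+S)\cap[T']\in t_0$, and one further invocation of the definition of $t_0$ applied to $T'$ yields the desired $T''\se T'$ with $[T'']\cap(L+S)=\0$.

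The critical observation is that $[T']$ is both null and meager in $\mbb{R}$. Nullness is immediate, since for any $g_0\in G$ the inclusion $g_0+[T']\se G+[T']$ forces $\lambda([T'])=0$. For meagerness, recall that $[T']$ is the $h$-image of a tree body in $\baire$, hence closed in the subspace of irrationals; its closure in $\mbb{R}$ therefore differs from $[T']$ only by rational limit points and is still null; being a closed null subset of $\mbb{R}$, that closure has empty interior, so $[T']$ itself is nowhere dense.

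The cardinality bound comes from counting pairs $(l,s)\in L\times S$ with $l+s\in[T']$, split according to whether $s\in G+[T']$. If $s\in G+[T']$, then Sierpiński applied to the null set $G+[T']$ gives $|S\cap(G+[T'])|<\mf{c}$, and for each such $s$ the Luzin set $L$ meets the meager translate $[T']-s$ in fewer than $\mf{c}$ points. If $s\notin G+[T']$ and $l\in G$, then $s=(-l)+(l+s)\in G+[T']$ by symmetry of $G$, a contradiction, so $l\in L\cap G^c$, which has size $<\mf{c}$ since $G^c$ is meager, and for each such $l$ the Sierpiński set $S$ meets the null translate $[T']-l$ in fewer than $\mf{c}$ points. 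By regularity of $\mf{c}$ each case contributes $(<\mf{c})\cdot(<\mf{c})<\mf{c}$ pairs, so $|(L+S)\cap[T']|<\mf{c}$ as required.

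The main obstacle will be extending Lemma~\ref{Miller fusion lemma for G delta} to Sacks trees for $s_0$: the fusion of Lemma~\ref{Fusion lemma for intervals} adapts with ordinary splitting in place of $\omega$-splitting (fixing $2^n$ nodes at level $n$), and the interval-measure bookkeeping goes through unchanged. A minor secondary point is checking $\non(t_0)=\mf{c}$ for $m_0$, $l_0$ and $cl_0$ as well (it is recorded for $s_0$ in the introduction), which follows from standard branch-avoidance fusion arguments analogous to the $s_0$ case.
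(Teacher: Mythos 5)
Your proposal is correct and follows essentially the same route as the paper: the same reduction via Rothberger's theorem to $|L|=|S|=\mf{c}$, the same use of Lemma~\ref{Miller fusion lemma for G delta} to get $G+[T']\in\mc{N}$, and a cardinality bound on $(L+S)\cap[T']$ that is just a reorganization (via symmetrizing $G$ and counting pairs) of the paper's four-way decomposition with $A=-G$ and $B=([T']+G)^c$. Your explicit flagging of the Sacks-tree version of the fusion lemma and of the nowhere-density of $[T']$ fills in details the paper leaves implicit, but the underlying argument is the same.
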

\begin{proof}
Let $L$ and $S$ be a generalized Luzin set and generalized Sierpiński set respectively. If $|L|<\c$ and $|S|<\c$, then $L+S\in t_0$, since every set of cardinality less than $\c$ belongs to $t_0$. So, without a loss of generality (Theorem \ref{Rothberger theorem}), let us assume that $|L|=|S|=\c$.
\\
We will proceed with the proof in the case $t_0=m_0$, the other cases are almost identical. Let $T$ be a Miller tree. By the virtue of Lemma \ref{Miller fusion lemma for G delta} let $G$ be a dense $G_\delta$ set and $T'\se T$ a Miller tree such that $[T']+G\in \mc{N}$. Let $A=-G$ and $B=([T']+G)^c$. Then $[T']\se(A+B)^c$. We will show that there is a Miller tree $T''\se T'$ which body is contained in $(L+S)^c$. We have:
\begin{align*}
L+S&=((L\cap A)\cup(L\cap A^{c}))+((S\cap B)\cup(S\cap B^{c}))
\\
& = ((L\cap A)+(S\cap B))\cup((L\cap A)+(S\cap B^{c}))\cup
\\
&\cup((L\cap A^{c})+(S\cap B))\cup((L\cap A^{c})+(S\cap B^{c})).
\end{align*}
$(L\cap A)+(S\cap B)\subseteq A+B$ and sets $(L\cap A)+(S\cap B^{c})$, $(L\cap A^{c})+(S\cap B)$ and $(L\cap A^{c})+(S\cap B^{c})$ are generalized Luzin, generalized Sierpi\'nski and of size less than $\mf{c}$, so their intersection with $[T']$ has a cardinality less than $\mf{c}$. It follows that indeed there exists a Miller tree $T''\subseteq T'$ such that $(L+S)\cap [T'']=\emptyset$ and therefore $L+S$ belongs to $m_0$.
\end{proof}
Let us remark that the assumption that $\mf{c}$ is regular cannot be omitted due to the following result (\cite{MZ}).
\begin{theorem}
It is consistent that there exist generalized Luzin set $L$ and generalized Sierpi{\'n}ski set $S$ such that $L+S=\mbb{R}^n$, and $\mf{c}=\aleph_{\omega_1}$.
\end{theorem}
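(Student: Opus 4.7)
The final statement asserts a consistency result: a ZFC model with $\c = \aleph_{\omega_1}$ carrying a generalized Luzin set $L$ and a generalized Sierpi\'nski set $S$ whose algebraic sum exhausts $\mbb{R}^n$, so that the regularity hypothesis of the previous theorem cannot be dropped. Regularity was used via Rothberger's theorem (Theorem \ref{Rothberger theorem}) to force $|L| = |S|$; in singular $\c$ of cofinality $\omega_1$ one can instead have (say) $|L| = \c$ and $|S| = \aleph_1$, and this asymmetry is the window the construction will exploit.

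My plan is a forcing construction over a ground model $V \models \CH$. In $V$ I would fix a Sierpi\'nski set $S$ of size $\aleph_1$. Then I would iterate a forcing $\bbp$ of length $\aleph_{\omega_1}$ with Cohen-type iterands, arranged so that $S$ remains a generalized Sierpi\'nski set in the final extension (the delicate preservation point, since vanilla Cohen forcing can introduce new null sets that devour $S$). In parallel I would bookkeep all real-names $\dot r_\alpha$ of the extension via a reflection argument, and at stage $\alpha$ target $\dot r_\alpha$ by arranging the stage-$\alpha$ Cohen real to equal $\dot r_\alpha - s$ for some $s \in S$; this is legitimate because translating a Cohen real by a real from the intermediate model preserves Cohenness, so $r_\alpha = l + s \in L + S$ for $l$ the adjusted stage-$\alpha$ generic.

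Verification then proceeds as usual: $\c = \aleph_{\omega_1}$ in $V^\bbp$ by a nice-name count; the set $L$ of all added Cohens has the generalized Luzin property because every meager Borel code reflects to some stage $\beta < \aleph_{\omega_1}$ and later Cohens avoid it, so $|L \cap M| \leq |\beta| + \aleph_0 < |L|$; $S$ is generalized Sierpi\'nski by the preservation clause; and $L + S = \mbb{R}^n$ by bookkeeping.

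The hard part will be the preservation step: guaranteeing that the ground-model Sierpi\'nski set survives an $\aleph_{\omega_1}$-length iteration of Cohen-like forcings. This likely demands either a carefully chosen iterand (e.g., a fusion-style forcing known to preserve positive outer measure for prescribed small sets) or a more intricate mixed iteration that grows $L$ and $S$ simultaneously. The singularity $\cof(\c) = \omega_1$ is used essentially throughout, as it stratifies the bookkeeping into $\omega_1$ cofinal blocks each of size $< \c$, exactly matching the ``$< |L|$'' budget that generalized Luzinness requires. An alternative route, avoiding forcing, would be a direct transfinite recursion inside a ZFC model already satisfying $\c = \aleph_{\omega_1}$ together with tuned invariants (say $\cov(\mc{M}) = \cov(\mc{N}) = \omega_1$), replacing genericity by pigeonhole at each stage.
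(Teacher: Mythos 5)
First, a point of order: the paper does not prove this theorem at all --- it is quoted from \cite{MZ} solely to witness that the regularity hypothesis in the preceding theorem cannot be dropped. So there is no in-paper argument to compare yours against, and your sketch has to stand on its own. It does not, because the step you yourself flag as ``delicate'' is in fact an outright obstruction. Your plan is to fix a Sierpi\'nski set $S$ of size $\aleph_1$ in a ground model of $\CH$ and carry it through an $\aleph_{\omega_1}$-long iteration of Cohen-type forcings. But already a single Cohen real makes the set of ground-model reals Lebesgue null (a classical fact: the old reals even acquire strong measure zero, by Carlson's theorem; see Bartoszy\'nski--Judah). Hence in the very first extension $S$ is contained in a null Borel set $N$ with $|S\cap N|=|S|$, and $S$ is no longer a (generalized) Sierpi\'nski set. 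No preservation theorem can rescue this while the iterands remain Cohen-like: the damage is done by the Cohen real itself, not by an accident of bookkeeping. The entire content of the proof is thereby pushed into your unexamined fallback clause (``a more intricate mixed iteration that grows $L$ and $S$ simultaneously''), and that is essentially where the actual construction of \cite{MZ} lives: both $L$ and $S$ must be built of size $\c=\aleph_{\omega_1}$ out of interleaved category-generic and measure-generic reals, with the equations $l+s=z$ folded into the recursion, rather than one of the two sets being a small ground-model object to be preserved.

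Two smaller remarks. Your reflection argument for the Luzin property of $L$ is sound as far as it goes (it uses $\cofin(\aleph_{\omega_1})=\omega_1>\omega$, so every Borel code appears at a bounded stage of a finite-support ccc iteration, and later generics avoid it). But your proposed non-forcing ``alternative route'' with $\cov(\mc{M})=\cov(\mc{N})=\omega_1$ points in the wrong direction for a pointwise transfinite recursion: at stage $\alpha$ you must choose $l_\alpha$ outside a union of $|\alpha|$ meager Borel sets while keeping $z_\alpha-l_\alpha$ outside a union of $|\alpha|$ null Borel sets, and for this to succeed at every $\alpha<\c$ you need such unions never to exhaust $\mbb{R}^n$, i.e.\ you need the relevant covering numbers \emph{large} (cofinally close to $\c$), not equal to $\omega_1$. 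As it stands, the proposal correctly identifies why singular $\c$ of cofinality $\omega_1$ is the right setting, but defers the one genuinely hard step --- producing a generalized Sierpi\'nski set that survives alongside $\aleph_{\omega_1}$ category-generic reals --- to an unspecified device.
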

		
\section{Eventually different families and t-measurablity}
		
Two members $f,g\in\omega^\omega$ of the Baire space are \textit{eventually different} (briefly: e.d.) iff $f\cap g$ is a finite subset of $\omega\times\omega$. 
Maximal eventually different families with respect to inclusion are called {\it \med families}.

Every e.d. family is a meager subset of the Baire space. 
It is natural to ask whether the existence of m.e.d. families that are either $s$-measurable or $s$-nonmeasurable can be proven in $\ZFC$. It is relatively consistent with ZFC that there is a m.e.d. family $\ca$ of cardinality smaller then $\mf{c}$ (see \cite{Kunen}). In such a case $\ca\in s_0$.
On the other hand there exists a perfect \ed family and therefore not all m.e.d. families are in $s_0$. The following two theorems answer this question positively.
\begin{theorem}\label{nonmeasurable_mad} There exists an $s$-nonmeasurable  m.e.d. family in the Baire space.
\end{theorem}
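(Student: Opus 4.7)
The strategy is to construct in two phases: first a special e.d.\ family $\mathcal{F}_*$ of cardinality $\mathfrak{c}$ designed to force $s$-nonmeasurability of any m.e.d.\ extension, and then to extend $\mathcal{F}_*$ to an m.e.d.\ family $\mathcal{A}$ via Zorn's Lemma.

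For Phase 1, fix a perfect tree $P^*\subseteq\omega^{<\omega}$ whose branches form a pairwise eventually different family; explicitly, parameterize branches by $x\in 2^\omega$ via $f_x(m)=\langle m, x\restricted k_m\rangle$ for a pairing function $\langle\cdot,\cdot\rangle$ and a sequence $k_m\to\infty$ (say $k_m=\lfloor\log_2(m+1)\rfloor$), so that if $x,y$ first disagree at position $k^*$ then $f_x(m)\ne f_y(m)$ for every $m$ with $k_m>k^*$. Enumerate the perfect subtrees of $P^*$ as $\{Q_\alpha:\alpha<\mathfrak{c}\}$ and recursively build $\mathcal{F}_\alpha:=\{x_\beta,c_\beta:\beta<\alpha\}$. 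Maintain the invariant that every $h\in\mathcal{F}_\alpha$ is either a branch of $P^*$ or a one-point perturbation of such a branch, so that $\{g\in[P^*]:|g\cap h|=\aleph_0\}$ has exactly one element (namely $h$, resp.\ the unperturbed branch). Consequently at most $|\mathcal{F}_\alpha|<\mathfrak{c}$ branches of $[Q_\alpha]$ are absorbed by $\mathcal{F}_\alpha$, so one can pick distinct $x_\alpha,y_\alpha\in[Q_\alpha]$, both eventually different from every $h\in\mathcal{F}_\alpha$ and distinct from all previously chosen elements.

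The key device is the one-point perturbation catcher $c_\alpha$: pick an index $n_0$, set $c_\alpha(n_0)=y_\alpha(n_0)+1$, and $c_\alpha(n)=y_\alpha(n)$ for $n\ne n_0$. Then $|c_\alpha\cap y_\alpha|=\aleph_0$, and for every $h\in\mathcal{F}_\alpha\cup\{x_\alpha\}$ one has $|c_\alpha\cap h|\le|y_\alpha\cap h|+1<\aleph_0$ since $y_\alpha$ is e.d.\ from $h$, so adding $x_\alpha,c_\alpha$ preserves eventual difference. The coding forces $c_\alpha\notin[P^*]$: if $c_\alpha=f_y$ for some $y\in 2^\omega$, then the agreements $c_\alpha(m)=f_y(m)$ for all $m\ne n_0$ combined with $k_m\to\infty$ would force $y=x$, contradicting the perturbation at $n_0$. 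Moreover $\{g\in[P^*]:|g\cap c_\alpha|=\aleph_0\}=\{y_\alpha\}$, so the rigidity invariant is preserved for the next stage. Setting $\mathcal{F}_*:=\{x_\alpha,c_\alpha:\alpha<\mathfrak{c}\}$, every forbidden branch $y_\alpha$ is caught by $c_\alpha\in\mathcal{F}_*$.

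For Phase 2, apply Zorn's Lemma to extend $\mathcal{F}_*$ to a maximal e.d.\ family $\mathcal{A}$. Maximality guarantees that every $g\in\omega^\omega\setminus\mathcal{A}$ is caught by some element of $\mathcal{A}$, so $\mathcal{A}$ is m.e.d.\ in $\omega^\omega$; and no $y_\alpha$ can lie in $\mathcal{A}$ because $c_\alpha\in\mathcal{A}$ already catches it. Hence for every perfect subtree $Q_\alpha\subseteq P^*$ one has $x_\alpha\in[Q_\alpha]\cap\mathcal{A}$ and $y_\alpha\in[Q_\alpha]\setminus\mathcal{A}$, so $P^*$ witnesses the $s$-nonmeasurability of $\mathcal{A}$. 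The main technical point is preserving the rigidity invariant (that every current family member has a unique ``$\cap\aleph_0$''-partner in $[P^*]$) at each stage, which is exactly what the coding of $P^*$ combined with the one-point perturbation accomplishes; this sidesteps the usual ZFC-level difficulty of producing catchers against an uncountably large e.d.\ family.
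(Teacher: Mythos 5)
Your proposal is correct and follows essentially the same route as the paper's proof: fix a perfect tree whose branches form an e.d.\ family, enumerate its perfect subtrees, and at stage $\alpha$ put one branch $x_\alpha$ of $[Q_\alpha]$ into the family while excluding another branch $y_\alpha$ by adding a finite perturbation of it (your $c_\alpha$ is the paper's $x_\alpha$, a catcher that agrees with the excluded branch infinitely often yet stays e.d.\ from everything else), then extend to an m.e.d.\ family. The extra coding of $P^*$ and the explicit ``rigidity invariant'' you maintain are harmless elaborations of what the paper leaves implicit.
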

\begin{proof} Let us fix a perfect tree $T\subseteq \omega^{<\omega}$ such that $[T]$ is e.d. in $\omega^\omega$. Let $\{ T_\alpha: \alpha<\c\}$ be an enumeration of $\Sacks(T)$ - a family of all perfect subtrees of $T$. 
By transfinite reccursion we define:
$$
\{ (a_\alpha,d_\alpha,x_\alpha)\in [T]\times [T]\times \omega^\omega:\alpha<\c\}
$$
such that for any $\alpha<\c$ we have:
\begin{enumerate}[\hspace{0.5cm}(1)]
 \item $a_\alpha,d_\alpha\in [T_\alpha]$,
 \item $\{ a_\xi:\xi<\alpha\}\cap \{ d_\xi:\xi<\alpha\}=\emptyset$,
 \item $\{ a_\xi:\xi<\alpha\}\cup \{ x_\xi:\xi<\alpha\}$ is \ed,
 \item $\forall^\infty n\; x_\alpha(n)=d_\alpha(n)$ but $x_\alpha\ne d_\alpha$.
\end{enumerate}
Assume that we are at the step $\alpha<\mf{c}$ of the construction and we have already defined the sequence:
$$
\{ (a_\xi,d_\xi,x_\xi)\in [T]^2\times \omega^\omega:\xi<\alpha\}.
$$
We can choose $a_\alpha,d_\alpha\in [T_\alpha]$ ($[T_\alpha]$ has cardinality $\c$) which fulfills conditions $(1),(2)$. 
Then choose any $x_\alpha\in \omega^\omega$ distinct from $d_\alpha$ but $(\forall^\infty n) d_\alpha(n)=x_\alpha(n)$. Then $x_\alpha\in\omega^\omega\setminus [T]$ and 
\[
\{ a_\xi:\xi<\alpha\}\cup \{ x_\xi:\xi<\alpha\}
\]
forms an \ed family in $\omega^\omega$. This completes the construction.\
\\
Now let us set $A_0 = \{ a_\alpha:\alpha < \c\}\cup \{ x_\alpha:\alpha<\c\}$ and let us extend it to m.e.d. family $A$. It is easy to check that $A$ is the desired $s$-nonmeasurable m.e.d. family.
\end{proof}

In \cite{R1} it was shown that if $\d = \w_1$ then there exists a $s$-nonmeasurable m.e.d. family $\ca$ and $\ca'\in [\ca]^{\w_1}$ which is dominating in $\w^\w$. 
Here $s$-nonmeasurability can be replaced by $l$-, $m$- or $cl$-nonmeasurability.

In the same paper it was proved that the following statement is relatively consistent with ZFC: "$\omega_1 < \d$ and there exists $cl$-nonmeasurable m.e.d. family $\ca$ 
and a dominating family $\ca'\subseteq \ca$ of the cardinality equal to $\d$".

The next theorem generalizes the result obtained in \cite{R1}.
\begin{theorem}\label{d_l_mad} There exists a m.e.d. family $\ca\subseteq \omega^\omega$ such that $\ca$ is not $s$-, $l$- and $m$-measurable, with a dominating subfamily $\cd\in [\ca]^{\le \d}$.
\end{theorem}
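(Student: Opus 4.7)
The plan is to adapt the diagonal construction from Theorem \ref{nonmeasurable_mad} with two enhancements: strengthen the perfect witness tree to a Laver tree $T\in\mbb{L}$ whose body $[T]$ is eventually different, and interleave the choice of a dominating subfamily of size $\d$ drawn from $[T]$ into the recursion. Since $\mbb{L}\se\mbb{M}\se\mbb{S}$, every Laver or Miller subtree of $T$ is in particular a perfect subtree of $T$, so enumerating $\Sacks(T)$ alone will make $T$ a common witness for $s$-, $m$-, and $l$-nonmeasurability.

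To construct $T$, assign to each $s\in\omega^{<\omega}$ an infinite set $S_s\se\omega$ such that $\{S_{s^\frown k}:k\in\omega\}$ is a pairwise disjoint family of infinite subsets of $S_s$ (for $s=\0$ take $S_\0=\omega$), and put
\[
T=\{t\in\omega^{<\omega}:(\forall i<|t|)\;t(i)\in S_{t\restricted i}\}.
\]
Then $succ(t)=S_t$ is infinite for each $t\in T$, so $T$ is a (complete) Laver tree. If $f,g\in[T]$ are distinct and first disagree at level $n$, then for every $m\ge n$ the values $f(m),g(m)$ lie in disjoint descendants of the refining partition past level $n$, so $f(m)\ne g(m)$; hence $[T]$ is e.d.

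Enumerate $\Sacks(T)=\{T_\alpha:\alpha<\c\}$, fix a dominating family $\{f_\gamma:\gamma<\d\}$, and by transfinite recursion on $\alpha<\c$ choose $(a_\alpha,d_\alpha,x_\alpha)$, and, if $\alpha<\d$, an additional $g_\alpha$, as follows. Pick $a_\alpha,d_\alpha\in[T_\alpha]$ new (distinct from each other and from everything already listed); obtain $x_\alpha$ from $d_\alpha$ by changing its value at a single coordinate (so $x_\alpha\notin[T]$ by e.d.\ of $[T]$), again avoiding earlier choices; and, since $T$ is Laver, select $g_\alpha\in[T]$ pointwise dominating $f_\alpha$ and distinct from all earlier elements (the set of such $g$'s in $[T]$ has size $\c$ while fewer than $\c$ elements are to be avoided). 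Put $\cd=\{g_\alpha:\alpha<\d\}$, $A_0=\{a_\alpha:\alpha<\c\}\cup\{x_\alpha:\alpha<\c\}\cup\cd$, and extend $A_0$ to a m.e.d.\ family $\ca\supseteq A_0$ via Zorn's lemma.

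Verification essentially repeats Theorem \ref{nonmeasurable_mad}. The family $A_0$ is e.d.: all of its elements lying in $[T]$ are distinct members of the e.d.\ set $[T]$, while each $x_\alpha$ agrees cofinitely only with $d_\alpha\notin A_0$ and with the remaining members of $A_0$ on at most finitely many coordinates. For every $\alpha<\c$ one has $a_\alpha\in[T_\alpha]\cap\ca$, and $d_\alpha\in[T_\alpha]\bez\ca$ because $x_\alpha\in\ca$ agrees cofinitely with $d_\alpha$, so placing $d_\alpha$ into $\ca$ would violate e.d. Consequently $T$ witnesses $s$-, $m$-, and $l$-nonmeasurability of $\ca$, and $\cd\se\ca$ is dominating of size $\d$ by construction. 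The chief obstacle is producing the Laver tree $T$ with eventually different body---a strengthening of the perfect-tree witness used in Theorem \ref{nonmeasurable_mad}---after which the recursion runs essentially as before, with the dominating branches $g_\alpha$ absorbed painlessly into the family thanks to the e.d.\ character of $[T]$.
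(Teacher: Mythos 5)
Your proof is correct, but it organizes the construction differently from the paper's. The paper keeps the four requirements structurally separate: it first embeds a dominating family of size $\d$ into the body of an almost disjoint tree on $(4\bbn)^{<\omega}$, then fixes three further pairwise a.d.\ trees $S\in\mbb{S}$, $M\in\mbb{M}$, $L\in\mbb{L}$ on the residue classes $4\bbn+1$, $4\bbn+2$, $4\bbn+3$ (so the four pieces are mutually eventually different for free), and runs three parallel diagonalizations --- over all perfect subtrees of $S$, all Miller subtrees of $M$, and all Laver subtrees of $L$ --- each producing its own triples $(a_\alpha,d_\alpha,x_\alpha)$. You instead build a single complete Laver tree $T$ with eventually different body and exploit $\mbb{L}\se\mbb{M}\se\mbb{S}$: every Laver or Miller subtree of $T$ is in particular a perfect subtree of $T$, so one diagonalization over $\mbb{S}(T)$ makes $T$ a simultaneous witness for $s$-, $m$- and $l$-nonmeasurability, and the infinite (hence unbounded) splitting of $T$ at every node lets you draw the dominating branches $g_\alpha$ from $[T]$ as well. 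This is a genuine economy --- one tree and one recursion instead of four trees and three recursions --- and it makes the eventual difference of the initial family immediate, since everything except the $x_\alpha$'s lies in the single e.d.\ set $[T]$; what the paper's arrangement buys is modularity, each ideal getting its own witness so that the argument is a literal threefold repetition of Theorem \ref{nonmeasurable_mad}, at the cost of the mod-$4$ bookkeeping. Your construction of $T$ via nested pairwise disjoint successor sets $S_{s^\frown k}\se S_s$, the check that distinct branches disagree at every level past their splitting node, and the observation that $x_\alpha\notin[T]$ (a one-coordinate modification of $d_\alpha$ would otherwise agree with $d_\alpha$ infinitely often inside the e.d.\ set $[T]$) are all sound, so the verification goes through exactly as in Theorem \ref{nonmeasurable_mad}.
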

\begin{proof} By definition there is a dominating family $\cd_0\subseteq \omega^\omega$ of size $\d$. We will show that there is an a.d. dominating family $\cd$ of the same size. Let $\cp = \{ A_m\in [\omega]^\omega:\; m\in\omega \}$ be a partition of $\omega$ into infinite subsets. Let us construct a tree as follows: 
$T_{-1}=\{ \emptyset \}$, next $T_0=\{ (0,n): n\in\omega\}$. 
Now assume that we have defined $T_n$ for a fixed $n\in\omega$ and let us enumerate $T_n=\{ s_k: k\in\omega\}$ then for every $m\in\omega$ let us set $A_m=\{ k_{m, i}: i\in\omega\}$ as an increasing sequence with $i$ running through $\omega$ and $m$ fixed. Define 
$T_{n+1,m} = \{ s_m\cup \{(n+1, k_{m, i})\}:i\in\omega\}$ and then let $T_{n+1}=\bigcup_{m\in\omega} T_{n+1,m}$ and finally $T=\bigcup_{n\in\omega\cup\{ -1\}} T_n$.  
It is easy to observe that $[T]$ forms an a.d. family in $\omega^\omega$. 

Now let us define an embedding $f:\cd_0\to [T]$ as follows: pick an arbitrary element $d\in\cd_0$ which is an union $\bigcup\{ d\upharpoonright n: n\in\omega\}$ 
then assign to $d\restricted 0=\emptyset\in T_{-1}$ and to $d\restricted 1$ $t_0=d\restricted 1=\{ (0,d(0))\}$. Now let us assume that we have assigned for a fixed 
$d\restricted n$  $t_n\in T_n$ for $n\in\omega$. Then there is unique $m\in\omega$ such that $t_n\in T_{n,m}$ but $A_m=\{ k_{m, i}: i\in\omega\}$ is represented by the increasing sequence 
$(k_{m, i})_{i\in\omega}\in\omega^\omega$ then $d\restricted n+1$ is assigned to $t_{n+1} = t_n\cup \{ (n+1,w)\}$ where $w=k_{m, d(n+1)}$ which is a greater than $d(n+1)$. 
From the construction we see that $t_{n+1}\in T_{n+1}$ and for any $n\in\omega$ $t_n\subseteq t_{n+1}$. Now let $f(d)=\bigcup\{ t_n\in T_n: n\in\omega:\}\in [T]$. 
It easy to see that this ensures that $f$ is one to one mapping and for any $d\in \cd_0$ $d\le f(d)$. Now let $\cd=\{ 4f(d): d\in\cd_0\}\subseteq (4\bbn)^\omega$ 
which forms a dominating family in $\omega^\omega$ of size equal to $\d=|\cd_0|$.

Now let us choose a.d. trees $S\subseteq (4\bbn+1)^{<\omega}$, $M\subseteq (4\bbn+2)^{<\omega}$ and $L\subseteq (4\bbn+3)^{<\omega}$ where $S$ is a perfect tree, $M$ is Miller 
and $L$ is Laver.

Let  us enumerate $\mbb{S}(S)=\{ S_\alpha: \alpha<\c\}$ - a family of all perfect subtrees of $S$, analogously $\mbb{M}(M)=\{ M_\alpha:\alpha<\c\}$, and $\mbb{L}(L)=\{ L_\alpha:\alpha<\c\}$. 
By transfinite reccursion let us define
\[
\{ w_\alpha \in [S]^2\times \omega^\omega\times [M]^2\times \omega^\omega\times [L]^2\times \omega^\omega:\alpha<\c\}
\]
where $w_\alpha=(a^s_\xi,d^s_\xi,x^s_\xi,a^m_\xi,d^m_\xi,x^m_\xi,a^l_\xi,d^l_\xi,x^l_\xi,)$ for any $\alpha<\c$, and such that for any $\alpha<\c$ we have:
\begin{enumerate}
 \item $a^s_\alpha,d^s_\alpha\in [S_\alpha]$,
 \item $\{ a^s_\xi:\xi<\alpha\}\cap \{ d^s_\xi:\xi<\alpha\}=\emptyset$,
 \item $\{ a^s_\xi:\xi<\alpha\}\cup \{ x^s_\xi:\xi<\alpha\}$ is e.d.,
 \item $\forall^\infty n\; x^s_\alpha(n)=d^s_\alpha(n)$ but $x^s_\alpha\ne d^s_\alpha$.
 
 \item $a^m_\alpha,d^m_\alpha\in [M_\alpha]$,
 \item $\{ a^m_\xi:\xi<\alpha\}\cap \{ d^m_\xi:\xi<\alpha\}=\emptyset$,
 \item $\{ a^m_\xi:\xi<\alpha\}\cup \{ x^m_\xi:\xi<\alpha\}$ is e.d.,
 \item $\forall^\infty n\; x^m_\alpha(n)=d^m_\alpha(n)$ but $x^m_\alpha\ne d^m_\alpha$.

 \item $a^l_\alpha,d^l_\alpha\in [L_\alpha]$,
 \item $\{ a^l_\xi:\xi<\alpha\}\cap \{ d^l_\xi:\xi<\alpha\}=\emptyset$,
 \item $\{ a^l_\xi:\xi<\alpha\}\cup \{ x^l_\xi:\xi<\alpha\}$ is e.d.,
 \item $\forall^\infty n\; x^l_\alpha(n)=d^l_\alpha(n)$ but $x^l_\alpha\ne d^l_\alpha$.
\end{enumerate}
Now assume that we are at the step $\alpha<\mf{c}$ of the construction and we have a partial sequence:
\[
\{ w_\alpha :\; \xi<\alpha\}
\]
which has a length at most $\omega\cdot |\alpha|<\c$. In the case of the perfect part we can choose in $[S_\alpha]$ (of size $\c$) $a^s_\alpha,d^s_\alpha\in [S_\alpha]$ which fulfills the first condition. 
Then choose any $x^s_\alpha\in \omega^\omega$ different than $d^s_\alpha$ but $(\forall^\infty n) d_\alpha(n)=x_\alpha(n)$ then $x^s_\alpha\in\omega^\omega\setminus [S]$ and 
\[
\{ a_\xi:\xi \le \alpha\}\cup \{ x_\xi:\xi \le \alpha\}
\]
forms an e.d. family in $\omega^\omega$. In the same way we can choose other points of our tuple for Miller and Laver trees. The construction is complete. Now let us set: 
\[
\mc{A}_s = \cd\cup \{ a^s_\alpha:\alpha < \c\}\cup \{ x^s_\alpha:\alpha<\c\},
\]
\[
\ca_m = \cd\cup \{ a^m_\alpha:\alpha < \c\}\cup \{ x^m_\alpha:\alpha<\c\}
\]
and
\[
\ca_l = \cd\cup \{ a^l_\alpha:\alpha < \c\}\cup \{ x^l_\alpha:\alpha<\c\}.
\]
Let us extend the family $\cd\cup\ca_s\cup\ca_m\cup\ca_l$ to any m.e.d. family $\ca$. It is easy to check that $\ca$ is required $s$-, $m$- 
and $l$-nonmeasurable m.e.d. family in $\omega^\omega$ with a dominating subfamily of size $\d$, which completes the proof.
\end{proof}


\begin{thebibliography}{123}

\bibitem{Brendle} Brendle J., Strolling trough paradise, Fund. Math. 148 (1995), pp. 1-25.

\bibitem{BKW}Brendle J., Khomskii Y., Wohofsky W., Cofinalities of Marczewski-like ideals, Colloquium Mathematicum, 150 (2017), pp. 269-279.

\bibitem{Goldstern} Goldstern M., \Repicky M., Shelah S., Spinas O., On Tree Ideals, Proc. of the Amer. Math. Soc. vol. 123 no. 5, (1995). pp. 1573-1581.

\bibitem{Jech} Jech T., Set theory, millenium edition, Springer Monographs in Mathematics, Springer-Verlag, (2003).

\bibitem{JMS} Judah H., Miller A., Shelah S., Sacks forcing, Laver forcing and Martin's Axiom, Archive for Math Logic 31 (1992) 145-161.

\bibitem{Kunen} Kunen, K., Set Theory. An Introduction to Independence Proofs, North Holland, Amsterdamm, New York, Oxford 1980. 

\bibitem{KyWe} Kysiak M., Weiss T., Small subsets of the reals and tree forcing notions, Proceedings of American Mathematical Society, vol. 132, nr 1, pp. 251-259, 2003.

\bibitem{Marczewski} Marczewski (Szpilrajn) E., Sur une classe de fonctions de W. Sierpiński et la classe correspondante d’ensembles, Fund. Math. 24 (1935), 17–34.

\bibitem{MZ} Michalski M., \.Zeberski Sz., Some properties of I-Luzin, Topology and its Applications, 189 (2015), pp. 122-135, 

\bibitem{Miller1} Miller A. W., Hechler and Laver Trees, arXiv:1204.5198, 2012.

\bibitem{R1} Ra{\l}owski R., Families of sets with nonmeasurable unions with respect to ideals defined by trees, Arch. Math. Logic, 54 (2015), no. 5-6, 649-658.

\bibitem{Rep} \Repicky M., Perfect sets and collapsing continuum, Comment. Math. Univ. Carolin. 44,2 (2003) 315–327.

\bibitem{Roth} Rothberger F., Eine \"Aquivalenz zwischen der Kontinuumhypothese und der Existenz der Lusinschen und Sierpińskischen Mengen, Fund. Math. 30 (1938), pp. 215–217.

\bibitem{Wohofsky}
Wohofsky W., There are no large sets which can be translated away from every Marczewski null set, WS2016 Hejnice,
{\tt http://www.winterschool.eu/files/937...}

\end{thebibliography}
\end{document}